\numberwithin{equation}{section}
\newcommand{\sfrac}[2]{#1/#2}
\newcommand{\N}{\mathbb{N}}
\newcommand{\Z}{\mathbb{Z}}
\newcommand{\R}{\mathbb{R}}
\newcommand{\C}{\mathbb{C}}
\newcommand{\Q}{\mathbb{Q}}
\newcommand{\T}{\mathbb{T}}
\newcommand{\PP}{\mathbb{P}}
\newcommand{\Rplus}{\R^+}
\newcommand{\Zplus}{\Z^+}
\renewcommand{\Re}{\operatorname{Re}}
\newcommand{\sgn}{\operatorname{sgn}}
\newcommand{\dd}{\ignorespaces\,\mathrm{d}}
\newcommand{\expe}{\mathrm{e}}
\newcommand{\half}{\frac{1}{2}}
\newcommand{\origin}{o}
\newcommand{\pinfty}{+\infty}
\newcommand{\minfty}{-\infty}
\newcommand{\lpar}{\left(}
\newcommand{\rpar}{\right)}
\newcommand{\rabs}{\right\vert}
\newcommand{\labs}{\left\vert}
\newcommand{\lset}{\left\lbrace}
\newcommand{\rset}{\right\rbrace}
\newcommand{\lnorm}{\left\Vert}
\newcommand{\rnorm}{\right\Vert}
\let\oldepsilon\epsilon
\let\oldvarepsilon\varepsilon
\renewcommand{\epsilon}{\oldvarepsilon}
\renewcommand{\varepsilon}{\oldepsilon}
\let\oldphi\phi
\let\oldvarphi\varphi
\renewcommand{\phi}{\oldvarphi}
\renewcommand{\varphi}{\oldphi}
\newtheorem{theorem}{Theorem}[section]
\newtheorem{proposition}[theorem]{Proposition}
\newtheorem{corollary}[theorem]{Corollary}
\newtheorem{lemma}[theorem]{Lemma}
\theoremstyle{definition}
\theoremstyle{remark}
\newtheorem{remark}[theorem]{Remark}
\newcommand{\oldk}{h}
\newcommand{\oldh}{\tilde k}
\newcommand{\oldzeta}{r}
\newcommand{\oldazeta}{\tilde a_r}
\title[Estimates for matrix coefficients]{Estimates for matrix coefficients of representations }
\author[Bruno]{Tommaso Bruno}
\address{Dipartimento di Scienze Matematiche ``Giuseppe Luigi Lagrange'',
  Politecnico di Torino, Corso Duca degli Abruzzi 24, 10129 Torino,
  Italy}
\email{tommaso.bruno@polito.it}
\author[Cowling]{Michael G.~Cowling}
\address{School of Mathematics and Statistics,
  University of New South Wales, UNSW Sydney 2052,
 Australia}
\email{m.cowling@unsw.edu.au}
\author[Nicola]{Fabio Nicola}
\address{Dipartimento di Scienze Matematiche ``Giuseppe Luigi Lagrange'',
  Politecnico di Torino, Corso Duca degli Abruzzi 24, 10129 Torino,
  Italy}
\email{fabio.nicola@polito.it}
\author[Tabacco]{Anita Tabacco}
\address{Dipartimento di Scienze Matematiche ``Giuseppe Luigi Lagrange'',
  Politecnico di Torino, Corso Duca degli Abruzzi 24, 10129 Torino,
  Italy}
\email{anita.tabacco@polito.it}
\thanks{T.~B., F.~N.~and A.~T.~were partially supported by PRIN 2015--2018 ``Variet\`a reali e complesse: geometria, topologia e analisi armonica'' (2015A35N9B\_001).
M.~C.\ was supported by the award of a Fubini Visiting Professorship at the Department of Mathematical Sciences of the Politecnico di Torino, and by the Australian Research Council (DP170103025).
We acknowledge that the present research has been partially supported by
MIUR grant Dipartimenti di Eccellenza 2018--2022 (E11G18000350001).
}
\keywords{Growth estimates, matrix coefficients, unitary representations, $\mathrm{SL}(2,\R)$, metaplectic representation}
\subjclass[2010]{%
22E30,  	    
22E45,  	    
43A30}  	        
\dedicatory{In memory of Elias M. Stein}
\begin{document}

\begin{abstract}
Estimates for matrix coefficients of unitary representations of semisimple Lie groups have been studied for a long time, starting with the seminal work by Bargmann, by Ehrenpreis and Mautner, and by Kunze and Stein.
Two types of estimates have been established: on the one hand, $L^p$ estimates, which are a dual formulation of the Kunze--Stein phenomenon, and which hold for all matrix coefficients, and on the other pointwise estimates related to asymptotic expansions at infinity, which are more precise but only hold for a restricted class of matrix coefficients.
In this paper we prove a new type of estimate for the irreducibile unitary representations  of $\mathrm{SL}(2,\R)$ and for the so-called metaplectic representation, which we believe has the best features of, and implies, both forms of estimate described above.
As an application outside representation theory, we prove a new $L^2$  estimate of dispersive type for the free Schr\"odinger equation in $\R^n$.
\end{abstract}

\maketitle

\section{Introduction}
Let $\pi$ be a strongly continuous unitary representation of a locally compact group $G$ on a Hilbert space $\mathcal{H}_\pi$.
A matrix coefficient of $\pi$ is a function on $G$ of the form $x \mapsto \langle \pi(x) \xi, \eta \rangle$, where $\xi, \eta \in \mathcal{H}_\pi$.
These matrix coefficients encode the properties of $\pi$.

We consider the particular case where $G \coloneqq \mathrm{SL}(2,\R)$.
This group has two special subgroups $A$ and $K$: the former consists of diagonal matrices and the latter of rotation matrices; more precisely, we set
\begin{equation}\label{def:K-and-A}
a_r \coloneqq \begin{pmatrix}
\expe^{-r} & 0 \\
0 & \expe^{r}
\end{pmatrix}
\qquad\text{and}\qquad
k_{\theta} \coloneqq \begin{pmatrix}
\cos\theta & \sin\theta \\
-\sin\theta & \cos\theta
\end{pmatrix}
\end{equation}
for all $r, \theta \in \R$.
Note that every element $x$ of $G$ admits a Cartan decomposition, that is, we may write  $x = k_{\theta_1} a_r k_{\theta_2}$ where ${\theta_1}, {\theta_2} \in \R$  and $r \in \Rplus\cup\{0\}$.

We are going to prove estimates for matrix coefficients of irreducible representations $\pi$ of the form
\begin{equation}\label{eq:basic-estimate}
\lpar \int_{-\pi}^{\pi} \int_{-\pi}^{\pi} \labs  \langle \pi(k_{\theta_1} a_r k_{\theta_2}) \xi, \eta \rangle \rabs ^2 \dd{\theta_1} \dd{\theta_2} \rpar^{1/2}
\leq C  \lnorm  \xi   \rnorm_{\mathcal{H}_\pi}  \lnorm  \eta  \rnorm_{\mathcal{H}_\pi}  \exp (- \lambda r)
\end{equation}
for all $\xi, \eta \in \mathcal{H}_{\pi}$  and all $r \in \Rplus$; the real parameter $\lambda$ depends on $\pi$.
These generalise and extend various estimates that have been proved over the years, which we now describe in more detail.

The unitary representations and harmonic analysis of $G$ have been studied for many years, starting with the fundamental paper of Bargmann \cite{Bargmann}, which identified three families of irreducible unitary representations of $G$, namely, the principal series, the discrete series, the complementary series, and one additional representation, the trivial representation.
We describe these in detail below.
Amongst many other things, Bargmann found explicit formulae in terms of special functions for the ``generalised spherical functions'', that is, the matrix coefficients $\langle \pi(\cdot) \xi, \eta \rangle$ of these representations $\pi$ when the vectors $\xi$ and $\eta$ are particular normalised vectors that transform by scalars under the action of $K$, that is, $\pi(k_{\theta})\xi = \expe^{in{\theta}}\xi$ and $\pi(k_{\theta})\eta = \expe^{im{\theta}}\eta$.
These led to asymptotic formulae of the form
\[
\langle \pi(k_{\theta_1} a_r k_{\theta_2}) \xi, \eta \rangle
\sim C_1(\pi, m,n) \exp (-\lambda_1 r) + C_2(\pi, m,n) \exp (-\lambda_2 r)
\]
as $r \to \pinfty$;  the (possibly complex) parameters $\lambda_1$ and $\lambda_2$ depend on which representation is under consideration, and in some cases only one term is present.
These formulae were instrumental in his proof of the Plancherel formula, which involves the representations of the principal and discrete series only.

Bargmann observed that all the generalised spherical functions associated to the discrete series belong to $L^2(G)$; those associated to the principal series belong to $L^{2+}(G)$, by which we mean that they belong to $L^{2+\epsilon}(G)$ for all $\epsilon \in \Rplus$; those associated to the complementary series belong to $L^{p+}(G)$ for some $p$ depending only on the representation.
The matrix coefficients of the trivial representation are all constants that do not decay at infinity at all.
The estimates that follow from his analysis do not seem to be uniform in $r$ when we consider different ``$K$-types'' $m$ and $n$ and different representations.

Another great leap forward was the work of Kunze and Stein \cite{Kunze-Stein}.
They showed that every matrix coefficient of every representation involved in the Plancherel formula, and hence every matrix coefficient of the regular representation, lies in $L^{2+}(G)$.
This is a dual formulation of a convolution estimate $ \lnorm f*g  \rnorm_2 \leq C(p) \lnorm f \rnorm_p \lnorm g \rnorm_2$ for all $p \in [1,2)$, now known as the Kunze--Stein phenomenon; see \cite{Cowling-Annals} for more details of this equivalence.
Kunze and Stein also established $L^{p+}$ estimates for the complementary series.
A typical estimate is of the form
\[
\lnorm  \langle \pi(\cdot) \xi, \eta \rangle   \rnorm_{q} \leq C(\pi, q) \lnorm  \xi   \rnorm_{\mathcal{H}_\pi}  \lnorm  \eta  \rnorm_{\mathcal{H}_\pi}
\qquad\forall \xi, \eta \in \mathcal{H}_{\pi}\quad\forall q \in (p,\pinfty).
\]

As time went on, other forms of decay estimates were established for $\mathrm{SL}(2,\R)$ and for more general groups.
We mention in particular the pointwise estimates of Howe and Tan \cite{Howe-Tan} for $\mathrm{SL}(2,\R)$ and the estimates of Howe \cite{Howe-CIME} for more general groups, as well as the $L^p$ estimates of the second author \cite{Cowling-Nancy}.
The asymptotic formulae found by Bargmann (see also Ehrenpreis and Mautner \cite{Ehrenpreis-Mautner}) have been generalised to general semisimple Lie groups to give asymptotic expansions of $K$-finite matrix coefficients of irreducible representations by Harish-Chandra.
See Warner \cite{Warner} or Casselman and Mili\v{c}i\'{c} \cite{Casselman-Milicic} for a comprehensive exposition; see also Knapp \cite{knapp} and Wallach \cite{wallach}.
These pointwise estimates are very precise ``at infinity'', but they only hold for some matrix coefficients and it is hard to see the sort of uniform behaviour that the Kunze--Stein phenomenon tells us must occur.
On the other hand, $L^{p+}$ estimates hold for all matrix coefficients of a given irreducible unitary representation, but as students of Lebesgue integration know, the fact that a function lies in some $L^q$-space does not mean much.  Despite this, $L^{p+}$ estimates have also found applications in representation theory and in related areas; see, for example, \cite{Nevo} and \cite{Benoist-Kobayashi}.

The aim of this paper is to present a new form of estimates for matrix coefficients, which we believe has the best features of both of the forms of estimate above.
These estimates were inspired by similar estimates for the free group due to Haagerup \cite{Haagerup-free} that have been extended to estimates for representations of groups of isometries of trees, and in particular, therefore, to groups such as $\mathrm{SL}(2, \Q_p)$; see \cite{CMS} for more information.
We treat only $\mathrm{SL}(2,\R)$ and some special representations of the metaplectic group.
We prove estimates for all matrix coefficients that reduce to sharp forms of the pointwise estimates of Howe \cite{Howe-CIME} for $K$-finite matrix coefficients and imply similar estimates to the $L^p$ estimates of Kunze--Stein \cite{Kunze-Stein} for $\mathrm{SL}(2,\R)$.
It would be nice to extend these to more general semisimple Lie groups in the future, and we envisage applications in representation theory for these.

As an application outside representation theory, we present some multi-dimensional dispersive estimates that go beyond the current fascination with dispersive estimates for the Schr\"odinger equation (see Tao \cite{tao} and also \cite{CNT} for a representation theoretic perspective).\par\medskip
Here is a plan of the rest of this work.
In Section 2, we describe the principal and complementary series, and in Section 3 the discrete series.
In Section 4 we prove estimates of the form \eqref{eq:basic-estimate}.
For the principal and complementary series, we use the approach of  Astengo, Cowling and Di Blasio~\cite{ACD1}; for the discrete series, we use that of Bargmann~\cite{Bargmann}.
In Section 5, we consider the matrix coefficients of the metaplectic representation and we provide the above mentioned application to the Schr\"odinger equation.

We write $A \lesssim B$ to indicate that there is a constant $C$ such that $A \leq CB$.
The implied constants $C$ do not depend on explicitly quantified parameters.
All ``constants'' are positive.

\section{The principal and complementary series}

We write elements of $\R^2$ as row vectors, so that $G$ acts on $\R^2$ by right multiplication, and $\origin$ for the origin.
For $\zeta \in \C$ and $\epsilon\in \lset 0,1\rset $, we define $\mathcal{V}_{\zeta, \epsilon}$ to be the space of smooth functions on $\R^2 \setminus \lset  \origin \rset$ such that
\[
f(\delta v)
= \sgn(\delta)^\epsilon \labs \delta \rabs^{2\zeta -1} f(v) \qquad\forall v\in \R^2 \setminus \lset  \origin\rset  \quad\forall\delta\in \R \setminus \lset 0\rset,
\]
and the representation $\pi_{\zeta, \epsilon}$ of $G$ on $\mathcal{V}_{\zeta, \epsilon}$ by
\[
\pi_{\zeta, \epsilon}(x) f(v) \coloneqq f(vx)
\qquad\forall v\in \R^2 \setminus \lset  \origin\rset  \quad\forall x\in G.
\]
Observe that the functions in $\mathcal{V}_{\zeta, \epsilon}$ are determined by their values on the unit circle.
Hence the space $\mathcal{V}_{\zeta, \epsilon}$ is spanned topologically by functions whose restriction to the unit circle is a complex exponential.
For every $\mu \in \Z$, we denote by $f_{\zeta, \mu}$ the function in $\mathcal{V}_{\zeta, \epsilon}$ such that
\[
f_{\zeta, \mu}(\cos\theta, \sin \theta)
= \frac{1}{\pi^{1/2}} \expe^{i\mu \theta}
\qquad\forall \theta \in \R.
\]
This forces $\mu-\epsilon$ to be even.

We may define the pairing (see~\cite[Lemma 3.2]{ACD1})
\begin{equation}\label{eq:pairing}
\begin{aligned}
(f,g)
&\coloneqq  \frac{1}{2}   \int_{-\pi}^{\pi} f(\cos\theta, \sin\theta) g(\cos\theta, \sin\theta)  \dd \theta\\
& =  \int_{-\pi/2}^{\pi/2} f(\cos\theta, \sin\theta) g(\cos\theta, \sin\theta) \dd \theta\\
& = \int_\R f(1,t)g(1,t) \dd t.
\end{aligned}
\end{equation}
We recall that if either  $ \Re \zeta =0$ (the \emph{principal series}), or
$\zeta \in \pm(0,\frac{1}{2}) $ and $\epsilon =0$ (the \emph{complementary series}),
then $\mathcal{V}_{\zeta,\epsilon}$ may be endowed with an inner product whose completion is a Hilbert space on which the representation $\pi_{\zeta,\epsilon}$ acts unitarily and, except when $\zeta=0$ and $\epsilon = 1$, is irreducible.

\subsection{The principal series}
If $\zeta = \frac{1}{2} is$, where $s\in \R$ (the so-called \emph{principal series}), the completion of the space $\mathcal{V}_{\frac{1}{2}is,\epsilon}$ with respect to the inner product
\begin{align*}
\langle f , g\rangle  & \coloneqq (f,\bar{g})
\end{align*}
is a Hilbert space $\mathcal{H}_{\frac{1}{2}is, \epsilon} $ on which $\pi_{\frac{1}{2}is,\epsilon}$ acts unitarily.
Moreover, the functions $f_{\frac{1}{2}is,\mu}$ form an orthonormal basis,
and each transforms under the action of $K$ by a complex exponential (recall the definitions \eqref{def:K-and-A} of $A$ and $K$).

\begin{theorem}\label{teo_principal}
For all $s\in \R\setminus \lset 0\rset $, all $\epsilon \in \lset 0,1\rset$ and all $\mu,\nu \in 2\Z+\epsilon$,
\[
\labs \langle \pi_{\frac{1}{2}is, \epsilon} (a_r) f_{\frac{1}{2}is,\mu}, f_{\frac{1}{2}is,\nu} \rangle  \rabs\lesssim \frac{|s|+1}{|s|} \expe^{-r}
\qquad\forall r\in \Rplus.
\]
\end{theorem}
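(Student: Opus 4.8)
The plan is to pass to the non-compact (line) model of the principal series, where $\pi_{\frac{1}{2}is,\epsilon}(a_r)$ acts as a weighted dilation, reduce the matrix coefficient to a single explicit integral over $\R$, and then win the decay from oscillation. Using the third form of the pairing in \eqref{eq:pairing} together with $\langle f,g\rangle=(f,\bar g)$, one restricts the basis vectors to the line $v=(1,t)$; the homogeneity of degree $is-1$ gives $f_{\frac{1}{2}is,\mu}(1,t)=\pi^{-1/2}(1+t^2)^{(is-1)/2}\expe^{i\mu\arctan t}$, and the identity $(1,t)a_r=\expe^{-r}(1,\expe^{2r}t)$ shows that $\pi_{\frac{1}{2}is,\epsilon}(a_r)$ sends $f_{\frac{1}{2}is,\mu}(1,t)$ to $\expe^{r(1-is)}f_{\frac{1}{2}is,\mu}(1,\expe^{2r}t)$. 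After the symmetrising substitution $t=\expe^{-r}u$ this yields the exact formula
\[
\langle \pi_{\frac{1}{2}is,\epsilon}(a_r) f_{\frac{1}{2}is,\mu}, f_{\frac{1}{2}is,\nu}\rangle = \frac{\expe^{-irs}}{\pi}\int_\R (1+\expe^{2r}u^2)^{(is-1)/2}(1+\expe^{-2r}u^2)^{(-is-1)/2}\,\expe^{i\mu\arctan(\expe^{r}u)-i\nu\arctan(\expe^{-r}u)}\dd u ,
\]
whose amplitude has modulus $(1+\expe^{2r}u^2)^{-1/2}(1+\expe^{-2r}u^2)^{-1/2}$ and whose phase is $\frac{s}{2}\log\frac{1+\expe^{2r}u^2}{1+\expe^{-2r}u^2}+\mu\arctan(\expe^{r}u)-\nu\arctan(\expe^{-r}u)$.

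The crucial point is that taking absolute values inside the integral is too lossy. The amplitude is $O(1)$ near $u=0$ but decays only like $\expe^{-r}/\labs u\rabs$ on the long middle range $\expe^{-r}\lesssim\labs u\rabs\lesssim \expe^{r}$, so its integral has size $r\,\expe^{-r}$ rather than the desired $\expe^{-r}$. To remove this spurious factor $r$ I would split the $u$-integral at $\labs u\rabs\sim\expe^{-r}$ and $\labs u\rabs\sim\expe^{r}$: on the core $\labs u\rabs\lesssim\expe^{-r}$ (and symmetrically on the far tail) the trivial bound already gives $O(\expe^{-r})$, which accounts for the $1$ in $\labs s\rabs+1$, while on the middle range the decay must be extracted from oscillation. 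There the factor $(1+\expe^{2r}u^2)^{is/2}$ dominates the phase, with derivative of order $s/u$; passing to the variable $x=\log\labs u\rabs$ turns the middle contribution into $\expe^{-r}\int \expe^{isx}(\text{amplitude})\dd x$ over an interval of length $\sim 2r$, and a single integration by parts produces the gain $1/\labs s\rabs$ responsible for the factor $\frac{\labs s\rabs+1}{\labs s\rabs}$, the boundary terms and the differentiated amplitude being of the same order.

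The main obstacle is to carry out this oscillatory estimate \emph{uniformly in the $K$-types $\mu$ and $\nu$}. These enter the phase through $\mu\arctan(\expe^{r}u)$ and $\nu\arctan(\expe^{-r}u)$, whose $x$-derivatives are of order $\mu\,\expe^{-x}$ and $\nu\,\expe^{x}$ on the middle range and are therefore not uniformly small; when $\labs\mu\rabs$ or $\labs\nu\rabs$ is comparable to $\expe^{2r}$ they can cancel the $s/u$ term and create genuine stationary points of the total phase. I expect to handle this by organising the integration by parts around the \emph{full} phase $\Phi_r$ and subdividing the middle range according to which of the three terms dominates $\Phi_r'$, so that on each piece either the factor $s/u$ or one of the $K$-type contributions controls the oscillation, while at any stationary point the large second derivative (again of order $\mu\,\expe^{-x}+\nu\,\expe^{x}$) keeps the stationary-phase contribution at the admissible size $O(\expe^{-r}/\labs s\rabs)$. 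Alternatively, one may record the closed form $f_{\frac{1}{2}is,\mu}(1,t)=\pi^{-1/2}(1+it)^{(\mu+is-1)/2}(1-it)^{(-\mu+is-1)/2}$, evaluate the integral as a Gauss hypergeometric function, and read the bound off its uniform asymptotics. In either route the delicate part is exactly ensuring that all implied constants are independent of $\mu$, $\nu$, $s$, and $r$.
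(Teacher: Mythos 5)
Your reduction is exactly the paper's: the same explicit integral, the same logarithmic change of variable, the same observation that putting absolute values inside costs a factor $r$, and the same splitting into a core, a middle range of length $\sim 2r$, and tails (in the paper's normalisation the middle range is $[-(r-1),r-1]$ in $y=\log t$, and the tails together with the deviation of the amplitude from $1$ contribute $O(1)$). You also correctly identify the one real difficulty, namely uniformity in the $K$-types. The gap is in how you propose to resolve it. You claim that at a stationary point of the full phase the second derivative is large, ``of order $\mu\expe^{-x}+\nu\expe^{x}$''. It is not: differentiating the paper's formula for $\phi_{r,s,\mu,\nu}'$ gives
\[
\phi_{r,s,\mu,\nu}''(y)= s B'(y)-\frac{\mu\tanh(y+r)}{2\cosh(y+r)}+\frac{\nu\tanh(y-r)}{2\cosh(y-r)},
\]
where $B$ is the bracketed factor, and on the middle range $\tanh(y+r)>0>\tanh(y-r)$, so in the case $\mu<0<\nu$ (the only case in which the $K$-type terms oppose the $s$-term and genuine stationary points arise) the two $K$-type contributions to $\phi''$ have \emph{opposite} signs and can cancel. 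Concretely, for $\mu=-\nu$ and $y=0$ one has $\phi''(0)=0$ exactly (both $B'$ and the $K$-type part vanish by symmetry), while choosing $\nu$ close to $s\cosh r$ makes $\phi'(0)$ vanish as well: the phase has degenerate critical points, the bound $|\phi''|^{-1/2}$ is unavailable there, and your subdivision ``according to which term dominates $\Phi_r'$'' breaks down precisely at such points, where no term dominates. Your fallback via the hypergeometric closed form is only a pointer; asymptotics uniform in all four parameters $s,\mu,\nu,r$ is itself a substantial undertaking.

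The paper closes exactly this gap without ever expanding around critical points. On the set where $|\phi'|\ge s/2$ it applies the first-derivative van der Corput lemma after splitting at the zeros of $\phi''$, whose number is uniformly bounded because $\phi''$ is a rational function of $\expe^y$ of bounded degree; this piece contributes $O(1/s)$. On the complementary sublevel set it uses the trivial bound together with the measure estimate $\labs\lset y:\labs\phi'(y)\rabs<s/2\rset\rabs\lesssim 1$: since the $s$-term of $\phi'$ lies in $(\tfrac34 s,s)$ on the middle range, this reduces to showing that $\nu/(2\cosh(y-r))-\mu/(2\cosh(y+r))$ spends a bounded amount of $y$-time in $[s/4,3s/2]$, which is done by a logarithmic-derivative (monotonicity) argument when $\mu\nu\ge0$ and by the elementary bound $\labs\lset y:\cosh(y+z)\in[a,b]\rset\rabs\le 2\log(2b/a)$ when $\mu<0<\nu$. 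A sublevel-set estimate of this kind (or some quantitative control of the degeneracy, e.g.\ via higher derivatives of the phase) is what your argument is missing; as written, it does not go through.
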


\begin{proof}
The result is trivially true if $r \in (0,1)$, so we may and shall assume that $r \in [1, \pinfty)$.
We abbreviate $f_{\frac{1}{2}is,\mu}$ to $f_\mu$.

From the properties of $f_{\mu}$,
\begin{equation}\label{arctanprop}
\begin{aligned}
\langle \pi_{\frac{1}{2}is, \epsilon} (x) f_{\mu}, f_{\nu} \rangle
&= \int_\R f_{\mu} (\expe^{-r},\expe^r t) {f}_{-\nu}(1,t)  \dd t \\
& = \expe^{-r} \int_\R  f_\mu (\expe^{-r}, t) {f}_{-\nu}(1,\expe^{-r}t)  \dd t \\
& = \frac{1}{\pi}\expe^{-r} \int_\R (\expe^{-2r} +t^2)^{(is-1)/2} \expe^{i\mu \arctan (\expe^r t)} \\
&\qquad\qquad\qquad \times (1+ \expe^{-2r} t^2)^{-(is+1)/2} \expe^{-i\nu \arctan (\expe^{-r} t)} \dd t \\
&= \frac{1}{\pi}\expe^{-r} \int_{0}^{\pinfty} \dots \dd t
+ \frac{1}{\pi}\expe^{-r} \int_{\minfty}^{0} \dots \dd t .
\end{aligned}
\end{equation}
The second integral on the last line is the same as the first with the signs of $\mu$ and $\nu$ changed;  since we are looking for estimates that are uniform in $\mu$ and $\nu$ it is enough to consider only the first integral, and bound it by a multiple of $(|s|+1)/|s|$; we change variables, setting $t \coloneqq e^y$, and rewrite the integral in the form
\begin{equation*}
\begin{aligned}
&\int_\R (\expe^{-2r} + \expe^{2y})^{(is-1)/2} \expe^{i\mu \arctan (\expe^{y+r} )} (1 + \expe^{-2r} \expe^{2y})^{-(is+1)/2} \expe^{-i\nu \arctan (\expe^{y-r})} \expe^y \dd y \\
&\qquad = \int_\R \alpha_r(y) \exp(i \phi_{r,s,\mu,\nu}(y)) \dd y,
\end{aligned}
\end{equation*}
where $\alpha_r(y)$ is equal to $[(1+ \expe^{-2y-2r})(1 + \expe^{2y-2r})]^{-1/2}$, which lies in $(0,1)$, and $\phi_{r,s,\mu,\nu}(y)$ is equal to
\[
\begin{aligned}
\frac{s}{2} [2y + \log( 1 + \expe^{-2y-2r}) -\log(1 + \expe^{2y-2r})] 
+ \mu \arctan(\expe^{y+r}) - \nu \arctan(\expe^{y-r}) .
\end{aligned}
\]
The right hand side of the last integral changes to its complex conjugate when $s$, $\mu$ and $\nu$ all change signs, so we may suppose that $s \in \Rplus$.

For brevity, we write $\tilde r \coloneqq r-1$.
Now
\begin{align*}
 \int_{\minfty}^{-\tilde r}  \alpha_r(y)  \dd y
&\leq \int_{\minfty}^{-\tilde r}  \expe^{y+r}  \dd y = \expe ,\\
 \int_{\tilde r}^{{\pinfty}} \alpha_r(y)  \dd y
&\leq \int_{\tilde r}^{\pinfty}  \expe^{-y-r}  \dd y = \expe,\\
\noalign{\noindent{and}}
\int_{-\tilde r}^{\tilde r}  1 - \alpha_r(y)  \dd y
&\leq \frac{1}{2} \int_{-\tilde r}^{\tilde r}  \expe^{-2y-2r} + \expe^{2y-2r} + \expe^{-4r} \dd y \\
&= \expe^{-2r} \sinh(2\tilde r) + \expe^{-4r}\tilde r
\leq \frac{1}{2\expe^2}  + \frac{1}{4\expe^5} \,,
\end{align*}
since $1 - [1 + z]^{-1/2} \leq \half z$ when $z \in \Rplus$.
Because
\begin{align*}
&\labs \int_\R \alpha_r(y) \exp(i \phi_{r,s,\mu,\nu}(y)) \dd y \rabs \\
&\qquad\leq \int_{\minfty}^{-\tilde r} \labs \alpha_r(y) \rabs \dd y
 + \int_{\tilde r}^{\pinfty} \labs \alpha_r(y) \rabs \dd y
 + \int_{-\tilde r}^{\tilde r} \labs \alpha_r(y) -1 \rabs \dd y \\
&\qquad\qquad\qquad+ \labs \int_{-\tilde r}^{\tilde r} \exp(i \phi_{r,s,\mu,\nu}(y)) \dd y \rabs,
\end{align*}
it will suffice to show that
\begin{equation*}
I \coloneqq \labs \int_{-\tilde r}^{\tilde r} \exp(i \phi_{r,s,\mu,\nu}(y)) \dd y \rabs
\lesssim \frac{s+1}{s} \,.
\end{equation*}
Let $N(\mu,\nu, r, s)$ be the number of zeros of $\phi_{r,s,\mu,\nu}''$ in $[-\tilde r, \tilde r]$.
Then van der Corput's Lemma applied to the integral over $\{ y \in [-\tilde r, \tilde r] :  \labs \phi_{r,s,\mu,\nu}'(y)\rabs \geq s/2\}$ and the trivial estimate applied to the complementary integral show that
\[
I \lesssim \frac{N(\mu,\nu, r, s)}{s} + \labs\lset  y \in [-\tilde r,\tilde r] : \labs \phi_{r,s,\mu,\nu}'(y) \rabs  < s/2\rset\rabs.
\]
An easy computation shows that
\[
\phi_{r,s,\mu,\nu}'(y)= s \left[ 1-  \frac{ \expe^{-2(y+r)}}{1+\expe^{-2(y+r)}} -  \frac{ \expe^{2(y-r)}}{1+\expe^{2(y-r)}}\right]+ \frac{\mu}{2\cosh(y+r)} -  \frac{\nu}{2\cosh(y-r)} \,.
\]
Now $\phi_{r,s,\mu,\nu}'$ is a rational function of $\expe^y$ with coefficients that depend on $r$, $s$, $\mu$ and $\nu$, so the same holds for $\phi_{r,s,\mu,\nu}''$; therefore the number of zeros of $\phi_{r,s,\mu,\nu}''$ is uniformly bounded with respect to these parameters.
Thus $N(\mu,\nu, r, s) \leq N$ for a universal constant $N$.
It remains only to show that
\begin{equation}\label{gmunursinterval}
\labs\lset y \in [-\tilde r,\tilde r] : \labs \phi_{r,s,\mu,\nu}'(y) \rabs < s/2 \rset\rabs
\lesssim 1.
\end{equation}

Observe that
\[
1- \frac{\expe^{-2(y+r)}}{1+\expe^{-2(y+r)}}
- \frac{\expe^{2(y-r)}}{1+\expe^{2(y-r)}}
= \frac{1 - \expe^{-4r}}{1+ \expe^{-2y-2r} + \expe^{2y-2r} +\expe^{-4r}}
\,,
\]
and to minimise this last expression for $y$ in $[-\tilde r,\tilde r]$, we take $y = \tilde r$ and obtain
\[
1- \frac{\expe^{-2(y+r)}}{1+\expe^{-2(y+r)}}
- \frac{\expe^{2(y-r)}}{1+\expe^{2(y-r)}}
\geq \frac{1 - \expe^{-4r}}{1+ \expe^{2-4r} + \expe^{-2} +\expe^{-4r}}
\,.
\]
As a function of $r$, the numerator of this function is increasing while the denominator is decreasing, so we conclude that
\[
1 > 1- \frac{\expe^{-2(y+r)}}{1+\expe^{-2(y+r)}}
- \frac{\expe^{2(y-r)}}{1+\expe^{2(y-r)}}
\geq \frac{1 - \expe^{-4}}{1+ \expe^{-2} + \expe^{-2} +\expe^{-4}}
= \tanh(1) > \frac{3}{4}
\]
for all $r \in [1, \pinfty)$ and all $y \in [-\tilde r, \tilde r]$.
Define
\[
\psi_{r,\mu,\nu}(y)
\coloneqq \frac{\nu}{2\cosh(y-r)} - \frac{\mu}{2\cosh(y+r)} ;
\]
then $\labs \phi_{r,s,\mu,\nu}'(y) \rabs<s/2$ implies that $\psi_{r,\mu,\nu}(y) \in [s/4, 3s/2 ]$.
We consider two cases.

If $\mu\nu\geq0$, then $\psi_{r,\mu,\nu}$ is monotone (increasing if either both $\mu>0$ and $\nu>0$ or both $\mu=0$ and $\nu>0$; decreasing if either both $\mu<0$ and $\nu<0$ or both $\mu<0$ and $\nu=0$).
Moreover, $\labs \lset y \in [-\tilde r,\tilde r] : \psi_{r,\mu,\nu}(y) \in [s/4, 3s/2]\rset   \rabs$ is equal to
\begin{align*}
\labs \lset  y \in [-\tilde r,\tilde r] :  \psi_{r,\mu,\nu}(y)>0,\;  \log(\psi_{r,\mu,\nu}(y))\in [\log(s/4), \log(3s/2)]\rset  \rabs.
\end{align*}
To prove that this quantity is bounded by a constant independent of $r$, $s$, $\mu$, and $\nu$, it suffices to show that the derivative of $\log(\psi_{r,\mu,\nu}(\cdot))$ is bounded away from $0$, uniformly in $r$, $\mu$, and $\nu$, that is,
\[
\labs \psi_{r,\mu,\nu}'(y) \rabs \gtrsim \psi_{r,\mu,\nu}(y).
\]
There are various cases to consider; for instance, if $\mu>0$ and $\nu>0$, then
\begin{align*}
\labs \psi_{r,\mu,\nu}'(y) \rabs
&= \psi_{r,\mu,\nu}'(y)
= \frac{\nu \sinh(r-y)}{2\cosh^2(r-y)} + \frac{\mu \sinh(r+y)}{2\cosh^2(r+y)} \\
& = \frac{\nu}{2\cosh(r-y)} \tanh(r-y) + \frac{\mu}{2\cosh(r+y)} \tanh(r+y)\\
& \geq \tanh(1)\frac{\nu}{2\cosh(r-y)}
 \geq \tanh(1) \psi_{r,\mu,\nu}(y).
\end{align*}
The other cases that arise when $\mu\nu \geq 0$ may be treated analogously.

If $\mu\nu<0$, then the set $\lset y \in [-\tilde r,\tilde r] : \psi_{r,\mu,\nu}(y) \in [s/4,3s/2]\rset $ is empty unless $\mu<0$ and $\nu>0$.
In this case, $\psi_{r,\mu,\nu}>0$.
Moreover, for $y$ in $[-\tilde r,\tilde r]$, there are uniform estimates
\[
\half \expe^{r+y} \leq \cosh(r+y) \leq \expe^{r+y}
\qquad\text{and}\qquad
\half \expe^{r-y} \leq \cosh(r-y) \leq \expe^{r-y}.
\]
Hence if $\sfrac{s}{4} \leq  \psi_{r,\mu,\nu}(y) \leq \sfrac{3s}{2}$, then $\sfrac {s \expe^{r}}{4} \leq \nu \expe^{y}-\mu \expe^{-y} \leq 3 s \expe^r$, and so
\[
\frac {s \expe^{r}}{8\labs\mu\nu\rabs^{1/2}} \leq  \frac{\expe^{y + z} + \expe^{-y-z}}{2} \leq \frac{3 s \expe^r}{2\labs\mu\nu\rabs^{1/2}} \,,
\]
where $z \coloneqq \half \log(\sfrac{\nu}{|\mu|})>0$.

We claim that
\[
\labs\lset y \in \R : \cosh(y+z) \in [a,b] \rset\rabs \leq 2 \log (2b/a)
\]
whenever $0 < a < b < \pinfty$ and $z \in \R$.
From this claim, it follows  immediately that
\[
\labs\lset y \in [-\tilde r, \tilde r] : \psi_{r,\mu,\nu}(y) \in [s/4,3s/2] \rset\rabs
\leq 2 \log (24),
\]
as required.
So we need to establish our claim.

Evidently, the truth (or otherwise) of the claim does not depend on $z$, so we shall assume that $z = 0$.
By symmetry, the size of the required set is twice $\labs\lset y \in \Rplus : \cosh(y) \in [a,b] \rset\rabs$.
But
\[
\labs\lset y \in \Rplus : a \leq \cosh(y) \leq b \rset\rabs
\leq
\labs\lset y \in \Rplus : a \leq \expe^y \leq 2b \rset\rabs
= \log(2b) - \log(a),
\]
and the claim follows.

The proof of~\eqref{gmunursinterval} and thus of the theorem is complete.
\end{proof}

\subsection{The complementary series}
Let $\Re \zeta \in (-\frac{1}{2}, 0)$, and define the intertwining operator $J_{\zeta, \epsilon}$ by setting $J_{\zeta, \epsilon} f (\cos \phi, \sin\phi)$ equal to
\[
\frac{1}{2c(\zeta, \epsilon)} \int_{-\pi}^\pi f(\cos\theta,\sin\theta) \sgn^\epsilon (\sin(\phi -\theta))\labs \sin (\theta-\phi) \rabs^{-1-2\zeta} \dd \theta
\]
where $c(\zeta,\epsilon)\coloneqq i^\epsilon \pi^{1/2}2^{-2\zeta} \frac{\Gamma(\sfrac{\epsilon}{2}-\zeta)}{\Gamma(\sfrac{(1+\epsilon)}{2}+\zeta)}$. It is known (see, for instance, \cite[Lemma 3.6]{ACD1}) that, for these $\zeta$,
\begin{itemize}
\item $J_{\zeta,\epsilon}$ maps $\mathcal{V}_{\zeta, \epsilon}$ bijectively and bicontinuously onto $\mathcal{V}_{-\zeta,\epsilon}$;
\item $J_{\zeta,\epsilon} f_{\zeta,\mu} = d(\zeta,\epsilon, \mu) f_{-\zeta,\mu}$, where $d(\zeta, \epsilon, \mu)= 2^{2\zeta} \frac{\Gamma(\sfrac{1}{2} +\zeta +\sfrac{\mu}{2})}{\Gamma(\sfrac{1}{2}-\zeta+\sfrac{\mu}{2})}$;
\item $J_{\zeta,\epsilon}\pi_{\zeta,\epsilon}(x)= \pi_{-\zeta,\epsilon}(x)J_{\zeta,\epsilon}$ for all $x\in G$;
\end{itemize}
further, the map $\zeta \mapsto J_{\zeta,\epsilon}$ extends analytically to $\lset \zeta \in \C : \Re\zeta \in (-\frac{1}{2},\frac{1}{2}) \rset $ and the three properties above continue to hold in this region.
Therefore if $\Re\zeta \in \pm (0,\frac{1}{2})$, for all $f,g\in \mathcal{V}_{\zeta,\epsilon}$ the pairing (as in \eqref{eq:pairing})
\[
(J_{\zeta,\epsilon} f, g) = \frac{1}{2}   \int_{-\pi}^{\pi} J_{\zeta,\epsilon} f(\cos\phi, \sin\phi) g(\cos\phi, \sin\phi) \dd \phi
\]
is well defined.

Now take $\zeta = \lambda \in \pm (0,\frac{1}{2})$ and $\epsilon =0$, and write $\pi_\lambda$, $\mathcal{V}_\lambda$ and $J_{\lambda}$ for $\pi_{\lambda,0}$, $\mathcal{V}_{\lambda, 0}$ and $J_{\lambda,0}$ respectively. This corresponds to the so-called \emph{complementary series}.
In this case, one may define on $\mathcal{V}_{\lambda}$ an inner product, written $\llangle\cdot, \cdot\rrangle $ to distinguish it from the previous one:
\begin{align*}
\llangle f , g\rrangle & \coloneqq (J_{\lambda}f,\bar{g}),
\end{align*}
and the completion of $\mathcal{V}_{\lambda}$ with respect to this inner product is a Hilbert space $\mathcal{H}_{\lambda} $ on which $\pi_{\lambda}$ acts unitarily (see, for example, \cite[Lemma 3.4]{ACD1}).
We write $d(\lambda,\mu)$ for $d(\lambda,0,\mu)$, and then
\begin{align*}
\lnorm f_{\lambda,\mu} \rnorm_{\mathcal{H}_\lambda}^2
&= \bigl\llangle f_{\lambda,\mu}, f_{\lambda,\mu}\bigr\rrangle
 = \lpar J_{\lambda} f_{\lambda,\mu}, \bar{f}_{\lambda,\mu} \rpar
 = d(\lambda,\mu) \lpar f_{-\lambda,\mu}, \bar{f}_{\lambda,\mu} \rpar \\
&=  d(\lambda,\mu) \lpar {f}_{-\lambda,\mu},  {f}_{\lambda,-\mu} \rpar
 =  d(\lambda,\mu).
\end{align*}
With respect to the inner product $\llangle\cdot,\cdot\rrangle$, the functions $f_{\lambda,\mu}$ are not normalised.
By the previous computation, it is clear that $d(\lambda, \mu)> 0$.
This may be seen explicitly using the recurrence formula for the gamma function, which implies that
\[
\frac{\Gamma(\sfrac{1}{2}+\lambda+\sfrac{\mu}{2})}{\Gamma(\sfrac{1}{2}-\lambda+\sfrac{\mu}{2})} = (-1)^\mu \frac{\Gamma(\sfrac{1}{2}+\lambda-\sfrac{\mu}{2})}{\Gamma(\sfrac{1}{2}-\lambda-\sfrac{\mu}{2})}
\]
where, in our case, $\mu$ is even since $\epsilon=0$ (see, for instance, \cite[eq.\ (2.8)]{ACD1}).
Hence we may define
\[
g_{\lambda,\mu}
\coloneqq \frac{1}{\lpar d(\lambda,\mu) \rpar^{1/2}} f_{\lambda,\mu}.
\]

We shall prove the following.
\begin{theorem}\label{teo_complementary}
For all $\lambda \in \pm (0,\frac{1}{2})$, all $r\in \Rplus$ and all $\mu,\nu\in 2\Z$,
\[
\labs\bigl\llangle \pi_{\lambda} (a_r)g_{\lambda,\mu}, g_{\lambda,\nu} \bigr\rrangle\rabs \lesssim \frac{1}{|\lambda|} \expe^{-r(1-2|\lambda|)}.
\]
\end{theorem}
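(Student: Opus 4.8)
The plan is to reduce the matrix coefficient to an explicit integral over $\R$, exactly as in the principal series, and then to estimate it by bare absolute values; the point to flag at the outset is that, in contrast with Theorem~\ref{teo_principal}, \emph{no} oscillatory cancellation (and hence no van der Corput) is needed here --- the gain will come entirely from a real exponential in the amplitude. First I would dispose of small $r$: by unitarity, $\labs\llangle\pi_\lambda(a_r)g_{\lambda,\mu},g_{\lambda,\nu}\rrangle\rabs\leq\lnorm g_{\lambda,\mu}\rnorm_{\mathcal{H}_\lambda}\lnorm g_{\lambda,\nu}\rnorm_{\mathcal{H}_\lambda}=1$, and since $|\lambda|<\half$ forces $\frac{1}{|\lambda|}\expe^{-r(1-2|\lambda|)}\gtrsim1$ for $r\in(0,1)$, the estimate is trivially true there; so I may assume $r\geq1$.

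Next I would bring in the intertwining operator. From $\llangle f,g\rrangle=(J_\lambda f,\bar g)$ and $g_{\lambda,\mu}=d(\lambda,\mu)^{-1/2}f_{\lambda,\mu}$, together with $J_\lambda\pi_\lambda(x)=\pi_{-\lambda}(x)J_\lambda$ and $J_\lambda f_{\lambda,\mu}=d(\lambda,\mu)f_{-\lambda,\mu}$, one obtains
\[
\llangle\pi_\lambda(a_r)g_{\lambda,\mu},g_{\lambda,\nu}\rrangle=\lpar\frac{d(\lambda,\mu)}{d(\lambda,\nu)}\rpar^{1/2}\lpar\pi_{-\lambda}(a_r)f_{-\lambda,\mu},\bar f_{\lambda,\nu}\rpar.
\]
The prefactor here is unbounded in $\mu,\nu$, so the integral alone cannot carry a uniform bound. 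The crucial observation is that the kernel $\labs\sin(\theta-\phi)\rabs^{-1-2\lambda}$ of $J_\lambda$ is symmetric, whence $(J_\lambda F,G)=(F,J_\lambda G)$; moving $J_\lambda$ onto the second slot instead (using $\bar f_{\lambda,\nu}=f_{\lambda,-\nu}$ and $d(\lambda,-\nu)=d(\lambda,\nu)$) yields the companion identity with prefactor $(d(\lambda,\nu)/d(\lambda,\mu))^{1/2}$ and integral $(\pi_\lambda(a_r)f_{\lambda,\mu},f_{-\lambda,-\nu})$. The two prefactors multiply to $1$, so one of them is at most $1$; I would keep that representation, reducing matters to bounding the corresponding integral by $\frac{1}{|\lambda|}\expe^{-r(1-2|\lambda|)}$.

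To estimate the integral I would use the homogeneity of $f_{\pm\lambda,\cdot}$ and the third form of the pairing \eqref{eq:pairing} to write it as $\int_\R f_{-\lambda,\mu}(\expe^{-r},\expe^r t)\,f_{\lambda,-\nu}(1,t)\dd t$, substitute $t=\expe^{-r}u$ and then $u=\expe^y$ on $\{u>0\}$ (the contribution of $\{u<0\}$ being the same with the signs of $\mu,\nu$ reversed). This produces $\frac1\pi\expe^{-r}\int_\R A(y)\exp(i\Phi(y))\dd y$ with positive amplitude
\[
A(y)=\expe^{-2\lambda y}(1+\expe^{-2y-2r})^{-\lambda-\half}(1+\expe^{2y-2r})^{\lambda-\half}
\]
and real phase $\Phi(y)=\mu\arctan(\expe^{y+r})-\nu\arctan(\expe^{y-r})$, which now plays no role. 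Bounding $\labs\int A\exp(i\Phi)\rabs\leq\int A$, I split $\R$ at $\pm r$: on $(-r,r)$ the two correction factors are bounded above and below uniformly in $r\geq1$, so $\int_{-r}^r A\lesssim\int_{-r}^r\expe^{-2\lambda y}\dd y\leq\frac{1}{2|\lambda|}\expe^{2|\lambda|r}$, while on the tails the correction factors force $A$ to decay like $\expe^{-|y|}$, contributing $O(\expe^{2|\lambda|r})$ and $O(\expe^{-2|\lambda|r})$. Hence $\int_\R A\lesssim\frac{1}{|\lambda|}\expe^{2|\lambda|r}$, and multiplying by the $\expe^{-r}$ in front gives precisely $\frac{1}{|\lambda|}\expe^{-r(1-2|\lambda|)}$; the case $\lambda<0$ is identical after the reflection $y\mapsto-y$.

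The main obstacle is not the integral estimate, which is a crude absolute-value bound, but the unbounded normalising ratio $(d(\lambda,\mu)/d(\lambda,\nu))^{1/2}$: recognising that it must be defeated by symmetry --- through the self-adjointness of $J_\lambda$ for the pairing, or equivalently through unitarity combined with the Weyl element $a_{-r}=k_{\pi/2}a_r k_{-\pi/2}$ --- is the one genuinely indispensable idea. Beyond that, I would only need to check carefully that the correction factors in $A$ are indeed $\Theta(1)$ on $(-r,r)$ uniformly in $r\geq1$ and that they furnish the stated exponential decay on the two tails; both are elementary, but they are what make the regional splitting rigorous and pin down the exact exponent $1-2|\lambda|$.
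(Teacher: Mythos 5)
Your proposal is correct and follows essentially the same route as the paper: reduce to the explicit integral via the intertwiner, bound it by absolute values with a three-region split (the middle region producing the factor $1/|\lambda|$ and the growth $\expe^{2|\lambda|r}$), and neutralise the unbounded ratio $(d(\lambda,\mu)/d(\lambda,\nu))^{1/2}$ by a symmetry argument. The only cosmetic difference is in that last step: the paper writes $\labs\llangle\pi_{\lambda}(a_r)g_{\lambda,\mu},g_{\lambda,\nu}\rrangle\rabs$ as the geometric mean $(A_{\lambda,\mu,\nu}(r)A_{\lambda,\nu,\mu}(-r))^{1/2}$ using $\llangle\pi_{\lambda}(a_r)g_{\lambda,\mu},g_{\lambda,\nu}\rrangle=\overline{\llangle\pi_{\lambda}(a_{-r})g_{\lambda,\nu},g_{\lambda,\mu}\rrangle}$, whereas you produce two representations with reciprocal prefactors via the symmetry of $J_\lambda$ and keep the favourable one --- a variant you yourself note is equivalent, and either way one must bound the integral for both signs of $\lambda$ (equivalently, both signs of $r$).
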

\begin{proof} Again, we may and shall assume that $r \in [1, \pinfty)$.
Observe first that for all $x\in G$,
\begin{align*}
\bigl\llangle \pi_{\lambda} (x)g_{\lambda,\mu}, g_{\lambda,\nu}\bigr\rrangle
&= \lpar J_{\lambda}\pi_{\lambda}(x)g_{\lambda,\mu}, \bar{g}_{\lambda,\nu} \rpar\\
& = \frac{1}{\lpar d(\lambda,\mu)d(\lambda,\nu) \rpar^{1/2}} \lpar J_{\lambda}\pi_{\lambda}(x)f_{\lambda,\mu}, \bar{f}_{\lambda,\nu} \rpar\\
& = \frac{1}{\lpar d(\lambda,\mu)d(\lambda,\nu) \rpar^{1/2}} \lpar \pi_{-\lambda}(x)J_{\lambda} {f}_{\lambda,\mu}, {f}_{\lambda,-\nu}\rpar \\
& = \frac{{d(\lambda,\mu)}}{\lpar d(\lambda,\mu)d(\lambda,\nu) \rpar^{1/2}} \lpar\pi_{-\lambda}(x) f_{-\lambda,\mu}, {f}_{\lambda,-\nu} \rpar\\
& = \lpar \frac{d(\lambda,\mu) }{d(\lambda,\nu)} \rpar^{1/2}
\int_{\R}{ \pi_{-\lambda}(x) f_{-\lambda,\mu}(1,t)} {f}_{\lambda,-\nu}(1,t) \dd t.
\end{align*}
If $x=a_{\pm r}$, then, proceeding as in~\eqref{arctanprop}, we see that
\begin{align*}
&\llangle \pi_{\lambda} (a_{\pm r})g_{\lambda,\mu}, g_{\lambda,\nu}\rrangle  \\
&\qquad= \lpar \frac{d(\lambda,\mu) }{d(\lambda,\nu)} \rpar^{1/2} \int_\R f_{-\lambda,\mu} (\expe^{ \mp r},\expe^{\pm r} t) {f}_{\lambda,-\nu}(1,t)  \dd t\\
&\qquad= \lpar \frac{d(\lambda,\mu) }{d(\lambda,\nu)} \rpar^{1/2} \frac{\expe^{\mp r}}{\pi}  \int_\R (\expe^{\mp 2r} +t^2)^{-\frac{2\lambda+1}{2}} \expe^{i\mu \arctan (\expe^{\pm r} t)} \\
&\qquad\qquad\qquad\qquad\qquad\qquad\qquad \times (1+ \expe^{\mp 2r} t^2)^{\frac{2\lambda-1}{2}} \expe^{-i\nu \arctan (\expe^{\mp r} t)} \dd t\\
&\qquad\eqqcolon \lpar \frac{d(\lambda,\mu) }{d(\lambda,\nu)} \rpar^{1/2} A_{\lambda,\mu,\nu}(\pm r).
\end{align*}
We claim that
\begin{equation}\label{claimcomplementary}
 \labs A_{\lambda,\mu,\nu}(\pm r) \rabs \lesssim \frac{1}{|\lambda|} \expe^{-r(1-2|\lambda|)}.
\end{equation}
Assuming the claim, we prove the theorem. Since
\[
\llangle \pi_{\lambda} (a_r)g_{\lambda,\mu}, g_{\lambda,\nu}\rrangle = \llangle g_{\lambda,\mu}, \pi_{\lambda} (a_{-r})g_{\lambda,\nu}\rrangle  = \overline{\llangle \pi_{\lambda} (a_{-r})g_{\lambda,\nu}, g_{\lambda,\mu}\rrangle  }
\]
and
\[
\llangle \pi_{\lambda} (a_{\pm r})g_{\lambda,\mu}, g_{\lambda,\nu}\rrangle
= \lpar \frac{d(\lambda,\mu) }{d(\lambda,\nu)} \rpar^{1/2}  A_{\lambda, \mu,\nu}(\pm r),
\]
we have, by the claim,
\begin{align*}
\labs \llangle \pi_{\lambda} (a_r)g_{\lambda,\mu}, g_{\lambda,\nu}\rrangle\rabs
& = \labs \llangle \pi_{\lambda} (a_r)g_{\lambda,\mu}, g_{\lambda,\nu}\rrangle \llangle \pi_{\lambda} (a_{-r})g_{\lambda,\nu}, g_{\lambda,\mu}\rrangle \rabs^{1/2} \\
& = \lpar A_{\lambda, \mu,\nu}(r) A_{\lambda, \nu, \mu}( -r) \rpar^{1/2}
  \lesssim \frac{1}{|\lambda|} \expe^{-r(1-2|\lambda|)}.
\end{align*}
Therefore it remains to prove the claim.

To do this, we first observe that it is enough to prove~\eqref{claimcomplementary} for $A_{\lambda, \mu,\nu}(r)$, since
\[
A_{\lambda, \mu,\nu}(-r) = A_{-\lambda, -\nu,-\mu}(r).
\]
Thus, consider $A_{\lambda, \mu,\nu}(r)$.
First,
\begin{align*}
\pi \labs A_{\lambda, \mu,\nu}(r)  \rabs &\leq \expe^{-r} \int_\R (\expe^{-2r} +t^2)^{-\sfrac{(2\lambda+1)}{2}} (1+ \expe^{-2r} t^2)^{\sfrac{(2\lambda-1)}{2}}  \dd t \\
& = 2  \expe^{-r} \int_0^{\pinfty} (\expe^{-2r} +t^2)^{-\sfrac{(2\lambda+1)}{2}} (1+ \expe^{-2r} t^2)^{\sfrac{(2\lambda-1)}{2}}  \dd t\\
& = 2 \expe^{-r} \left( \int_0^{\expe^{-r}}\dots  \dd t + \int_{\expe^{-r}}^{\expe^r}\dots  \dd t + \int_{\expe^r}^{\pinfty} \dots  \dd t \right) \\
&\eqqcolon 2 \expe^{-r}( A_1 + A_2 + A_3),
\end{align*}
say.
Then
\[
A_1 \leq \int_0^{\expe^{-r}} (\expe^{-2r} +t^2)^{- \lambda -\sfrac{1}{2}}  \dd t \leq \expe^{2r(\lambda +\sfrac{1}{2})} \expe^{-r} = \expe^{2r\lambda},
\]
while
\[
A_2 \leq  \int_{\expe^{-r}}^{\expe^r} (\expe^{-2r} +t^2)^{- \lambda -\sfrac{1}{2}}  \dd t \leq \int_{\expe^{-r}}^{\expe^r} t^{- 2\lambda -1}  \dd t \lesssim \frac{1}{|\lambda|} \expe^{2|\lambda| r},
\]
and finally
\[
\begin{aligned}
A_3
&\leq  \int_{\expe^r}^{\pinfty} (1 +\expe^{-2r}t^2)^{ \lambda -\sfrac{1}{2}} t^{-2\lambda -1} \dd t \\
&= \expe^{-2\lambda r}\int_{1}^{\pinfty} (1 +s^2)^{ \lambda -\sfrac{1}{2}} s^{-2\lambda -1} \dd s \lesssim  \expe^{-2\lambda r},
\end{aligned}
\]
which completes the proof of the claim and of the theorem.
\end{proof}

\subsection{Optimality}
Theorem \ref{teo_principal} is best possible, in the sense that the term $(|s|+1)/|s|$ cannot be significantly improved.
Indeed, the asymptotic formulae of Bargmann \cite{Bargmann} imply that
\[
\limsup_{r \to \pinfty} \expe^{r} \labs\langle \pi_{\frac{1}{2}is, \epsilon} (a_r) f_{\frac{1}{2}is,\mu}, f_{\frac{1}{2}is,\nu} \rangle\rabs = C
\begin{cases}
|s|^{-1/2} \coth^{1/2}(\pi |s|)   & \text{when $\epsilon = 0$}, \\
|s|^{-1/2} \tanh^{1/2}(\pi |s|)   & \text{when $\epsilon = 1$}.
\end{cases}
\]
This implies immediately that we cannot do better when $|s| \leq 1$, but suggests that $(|s|+1)/|s|$ may not be optimal when $|s| \geq 1$.
However, it is evident that
\[
\labs\langle \pi_{\frac{1}{2}is, \epsilon} (a_0) f_{\frac{1}{2}is,\mu}, f_{\frac{1}{2}is,\mu} \rangle\rabs = 1,
\]
and so no bound that vanishes when $|s| \to \pinfty$ can be valid.

Howe and Tan \cite{Howe-Tan} found a uniform estimate for the principal series which also holds when $s = 0$, namely,
\[
\labs \langle \pi_{\half is, \epsilon} (a_r) f_{\half is,\mu}, f_{s,\nu} \rangle  \rabs
\lesssim (1+r)  \expe^{-r}
\]
for all $\epsilon \in \lset 0,1 \rset$, all $r\in \Rplus$ and all $\mu,\nu \in 2\Z+\epsilon$.

The proof of this estimate goes as follows; we first show that
\[
\labs \langle \pi_{\frac{1}{2}is, \epsilon} (a_r) f_{\frac{1}{2}is,\mu}, f_{\frac{1}{2}is,\nu} \rangle  \rabs
\lesssim
\expe^{-r} \int_\R (\expe^{-2r} +t^2)^{-\sfrac{1}{2}} (1+ \expe^{-2r} t^2)^{-1/2}  \dd t,
\]
by estimating as in the proof of Theorem \ref{teo_complementary},
and then break this integral into three parts and estimate each, much as we estimated $A_1$, $A_2$ and $A_3$ above.
However, the integral corresponding to $A_2$ is estimated as follows:
\[
\int_{\expe^{-r}}^{\expe^r} (\expe^{-2r} +t^2)^{ -\sfrac{1}{2}}  \dd t
\leq \int_{\expe^{-r}}^{\expe^r} t^{ -1}  \dd t
\lesssim r
\]
when $r \in [1,\pinfty)$.
This approach also gives complementary series estimates that do not blow up when the parameter $\lambda$ approaches $0$.

\section{The discrete series}
Conjugation with the matrix
\[
\begin{pmatrix}
        1 & i \\
        i & 1
      \end{pmatrix}
\]
converts $\mathrm{SL}(2,\R)$ into the group $\mathrm{SU}(1,1)$ of complex matrices of determinant $1$ that preserve the quadratic form $B$ defined by $B(w,z) \coloneqq |w|^2 - |z|^2$.
We follow Bargmann~\cite{Bargmann} (with minor notational differences), and refer in particular to~\cite[\textsection 9]{Bargmann}.
Hence, we identify the elements of $G$ with the matrices
\begin{align}\label{Gdiscrete}
x=
\begin{pmatrix}
  \alpha & \bar{\beta} \\
  \beta & \bar{\alpha}
 \end{pmatrix},
\end{align}
where $|\alpha|^2-|\beta|^2=1$.
For $\ell\in \Zplus$, we denote by $\mathcal{H}_\ell$ the Hilbert space of holomorphic functions on the unit disk $D \coloneqq \lset z\in \C: |z|<1\rset $ endowed with the inner product
\[
\langle f,g\rangle_\ell
\coloneqq \frac{\ell-1}{2\pi i} \int_D (1-|z|^2)^{\ell-2} \bar{f}(z) g(z) \dd \sigma(z)
\]
if $\ell>1$, and when $\ell = 1$,
\[
\langle f,g\rangle_1 \coloneqq \lim_{\ell \to 1+} \frac{\ell -1}{\pi} \int_D (1-|z|^2)^{\ell-2} \bar{f}(z) g(z) \dd \sigma(z),
\]
where $\sigma$ denotes Lebesgue measure in $\C$.
Write $h$ for $\half\ell$.

\subsection{The discrete series $D^+_\oldk$.}
Consider the action of $G$ on $D$ given by
\[
xz \coloneqq \frac{\bar{\alpha} z + \bar{\beta}}{\beta z + \alpha}
\qquad\forall z\in D,
\]
and the representation $\pi_\ell^+$ on $\mathcal{H}_\ell$ given by
\[
\pi_\ell^+(x) f(z) \coloneqq \mu^+(x,x^{-1}z)^\ell f(x^{-1}z)
\qquad\forall x \in D \quad\forall z \in D,
\]
where $\mu^+(x,z) \coloneqq \beta z + \alpha$.
Then $\pi_\ell^+$ is an irreducible unitary representation of $G$ on $\mathcal{H}_\ell$, and we say that it belongs to the class $D^+_\oldk$.
The functions
\[
g_{\ell, m}^+(z)\coloneqq (-1)^{m-\oldk } \binom{\ell-1+m-\oldk}{m-\oldk}^{1/2}z^{m-\oldk}
\qquad\forall m\in \oldk+\N,
\]
form an orthonormal basis of $\mathcal{H}_\ell$. Define now
\[
\oldazeta \coloneqq \begin{pmatrix}
\cosh\oldzeta & \sinh\oldzeta \\
\sinh\oldzeta & \cosh\oldzeta
\end{pmatrix}
\qquad\text{and}\qquad
\oldh_\theta \coloneqq
\begin{pmatrix}
\expe^{-i\theta} &0 \\
0 & \expe^{i\theta}
\end{pmatrix}
\]
for all $\oldzeta \in \R$ and $\theta \in [-\pi,\pi)$.
We shall prove the following theorem.
\begin{theorem}\label{teo_discrete+}
For all $\oldzeta \in \Rplus$, all $\ell \in \Z^+$, and all $m,n\in \half\ell + \N$,
\[
\labs  \langle g_{\ell,m}^+, \pi^+_\ell(\oldazeta) g_{\ell,n}^+\rangle_\ell \rabs \leq  (\cosh\oldzeta)^{-1}.
\]
\end{theorem}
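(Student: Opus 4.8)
The plan is to make the representation completely explicit on the holomorphic model, reduce the matrix coefficient to a single Taylor coefficient, and then prove a sharp, cancellation-sensitive bound on that coefficient. Since the asserted inequality has no implied constant, every factor must be tracked exactly, and—unlike in the previous theorems—there is no reduction to $r\ge1$.

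First I would compute the action of $\pi_\ell^+(\tilde a_r)$ explicitly. Since $\tilde a_r^{-1}=\tilde a_{-r}$ acts on the disk by $z\mapsto (z\cosh\oldzeta-\sinh\oldzeta)/(\cosh\oldzeta-z\sinh\oldzeta)$ and the associated cocycle is $\mu^+(\tilde a_r,\tilde a_r^{-1}z)=(\cosh\oldzeta-z\sinh\oldzeta)^{-1}$, one gets
\[
\pi_\ell^+(\tilde a_r)f(z)=(\cosh\oldzeta-z\sinh\oldzeta)^{-\ell}\,f\Bigl(\tfrac{z\cosh\oldzeta-\sinh\oldzeta}{\cosh\oldzeta-z\sinh\oldzeta}\Bigr).
\]
Applying this to $g_{\ell,n}^+(z)=(-1)^{n-\oldk}\binom{\ell-1+n-\oldk}{n-\oldk}^{1/2}z^{n-\oldk}$ and writing $j=n-\oldk$, $k=m-\oldk$ and $T=\tanh\oldzeta$, I obtain the clean form
\[
\pi_\ell^+(\tilde a_r)g_{\ell,n}^+(z)=(-1)^{j}\binom{\ell-1+j}{j}^{1/2}(\cosh\oldzeta)^{-\ell}\frac{(z-T)^{j}}{(1-zT)^{\ell+j}}.
\]

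Next I would read off the matrix coefficient. Because $\{g_{\ell,m}^+\}$ is orthonormal and $\langle z^k,z^{k'}\rangle_\ell=\delta_{kk'}\binom{\ell-1+k}{k}^{-1}$, the coefficient is exactly one normalised Taylor coefficient:
\[
\langle g_{\ell,m}^+,\pi_\ell^+(\tilde a_r)g_{\ell,n}^+\rangle_\ell=(-1)^{k+j}\binom{\ell-1+k}{k}^{-1/2}\binom{\ell-1+j}{j}^{1/2}(\cosh\oldzeta)^{-\ell}\,[z^k]\frac{(z-T)^{j}}{(1-zT)^{\ell+j}}.
\]
One can equivalently package all the coefficients at once by pairing the reproducing kernels $K_w(z)=(1-\bar wz)^{-\ell}$, which yields the generating function $\langle K_u,\pi_\ell^+(\tilde a_r)K_v\rangle_\ell=(\cosh\oldzeta(1-\bar vu)-\sinh\oldzeta(u-\bar v))^{-\ell}$; this is a convenient bookkeeping device. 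In either formulation the theorem is equivalent to the single inequality
\[
\binom{\ell-1+k}{k}^{-1/2}\binom{\ell-1+j}{j}^{1/2}(\cosh\oldzeta)^{-(\ell-1)}\Bigl|[z^k]\frac{(z-T)^{j}}{(1-zT)^{\ell+j}}\Bigr|\le1
\]
for all integers $k,j\ge0$ and all $\oldzeta\in\Rplus$.

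The bound itself is the crux. The factor $(z-T)/(1-zT)$ is a Blaschke factor, so on $\{|z|=1\}$ the modulus of the integrand collapses to $|1-zT|^{-\ell}$, independent of $j$; but estimating $[z^k]$ by the boundary $L^1$-norm $\frac1{2\pi}\int|1-\expe^{i\theta}T|^{-\ell}\dd\theta$ discards the oscillation and gives only a weaker bound of Howe--Tan type. For instance, when $\ell=1$ this integral exceeds $(\cosh\oldzeta)^{-1}$ by a factor of order $\oldzeta$, so cancellation among the Taylor coefficients must be exploited. The approach I would take is to identify $[z^k]\frac{(z-T)^j}{(1-zT)^{\ell+j}}$ with a Jacobi polynomial in $T^2$ (this is Bargmann's explicit formula), absorb the binomial prefactors, and invoke a sharp uniform bound for Jacobi polynomials. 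This final step is the main obstacle: the constant $1$ is attained only in a coupled limit $\oldzeta,m,n\to\pinfty$ (for $\ell=1$ the coefficients tend to $\pm1$ as $T\to1$), so the estimate is genuinely sharp and immune to any absolute-value bound. I therefore expect to need a saddle-point/contour analysis of $[z^k]$ with an $\oldzeta$-dependent radius, or an equivalent sharp estimate for the Jacobi polynomial, to recover exactly $(\cosh\oldzeta)^{-1}$.
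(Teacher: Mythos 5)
Your reduction is correct and follows essentially the same route as the paper: the matrix coefficient is a single normalised Taylor coefficient of $(z-T)^{j}(1-zT)^{-\ell-j}$, which via Bargmann's formula is a Jacobi polynomial $P^{(\ell-1,\,m-n)}_{n-\oldk}$ evaluated at $x=(y-1)/(y+1)$ with $y=\sinh^2\oldzeta$, multiplied by the weight $(\cosh\oldzeta)^{-1}\cdot(\operatorname{sech}\oldzeta)^{\ell-1}(\tanh\oldzeta)^{m-n}$ and the square roots of binomial coefficients. You also correctly observe that any absolute-value or boundary-$L^1$ bound loses the constant. But the proof stops precisely at the step you label ``the main obstacle'': the sharp uniform bound is asserted to be needed, not established, and the saddle-point analysis you propose as a substitute is not carried out (and would be a delicate way to recover the exact constant $1$).

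The missing ingredient is exactly the inequality
\[
\labs \mathbf{g}^{(\alpha,\beta)}_{\nu}(x) \rabs
= \left( \frac{\Gamma(\nu +1)\Gamma(\nu+\alpha+\beta +1)}{\Gamma(\nu +\alpha+1)\Gamma(\nu+\beta+1)} \right)^{\sfrac{1}{2}} \left( \frac{1-x}{2}\right)^{\sfrac{\alpha}{2}}\left( \frac{1+x}{2}\right)^{\sfrac{\beta}{2}} \labs P_{\nu}^{(\alpha,\beta)}(x) \rabs \leq 1 ,
\]
applied with $\alpha=\ell-1$, $\beta=m-n$, $\nu=n-\oldk$; the weight factors $((1-x)/2)^{\alpha/2}((1+x)/2)^{\beta/2}$ and the Gamma ratio are precisely the prefactors you already have in hand, and what is left over is exactly $(\cosh\oldzeta)^{-1}$. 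Crucially, this bound is not obtained by asymptotic or contour analysis: the fully normalised Jacobi function $\mathbf{g}^{(\alpha,\beta)}_{\nu}$ is, up to a phase, a matrix coefficient of a unitary irreducible representation of $\mathrm{SU}(2)$ with respect to an orthonormal basis, so it is bounded by $1$ by Cauchy--Schwarz and unitarity alone (the paper cites Koornwinder--Kostenko--Teschl, eq.~(4.8), and notes the observation goes back to Vilenkin). This also explains why the constant $1$ is sharp yet attainable by a soft argument. Without this identification (or an equivalent proof of the displayed inequality), your argument is incomplete at its decisive step.
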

\begin{proof}
In this proof, we omit the superscript $^+$ and hide the dependence on $\ell$.
Define
\[
v_{m,n}(x) \coloneqq \langle g_{\ell,m}, \pi_\ell(x) g_{\ell,n}\rangle_\ell
\qquad\forall x\in G,
\]
$y\coloneqq \sinh^2\oldzeta$ and $W_{m,n}(y) \coloneqq v_{m,n}(\oldazeta)$.
Then by~\cite[(11.2)]{Bargmann},
\begin{equation}\label{disc1}
\begin{aligned}
W_{m,n}(y)
&= \frac{(-1)^{n-\oldk}}{\Gamma(2\oldk)} \left( \frac{\Gamma(m+\oldk)\Gamma(n+\oldk)}{\Gamma(m+1-\oldk)\Gamma(n+1-\oldk)}\right)^{\sfrac{1}{2}} y^{-\oldk}\left(\frac{y}{y+1}\right)^{\sfrac{(m+n)}{2}} \\
&\qquad\times F(\oldk-m,\oldk-n,2\oldk, -\sfrac{1}{y})\\
&= \frac{(-1)^{n-\oldk}}{\Gamma(2\oldk)} \left( \frac{\Gamma(m+\oldk)\Gamma(n+\oldk)}{\Gamma(m+1-\oldk)\Gamma(n+1-\oldk)}\right)^{\sfrac{1}{2}} y^{-\oldk}\left(\frac{y}{y+1}\right)^{\sfrac{(m+n)}{2}} \\
&\qquad\times F(\oldk-n,\oldk-m,2\oldk, -\sfrac{1}{y})\\
&= \frac{(-1)^{n-\oldk}}{\Gamma(2\oldk)} \left( \frac{\Gamma(m+\oldk)\Gamma(n+\oldk)}{\Gamma(m+1-\oldk)\Gamma(n+1-\oldk)}\right)^{\sfrac{1}{2}} y^{-\oldk}\left(\frac{y}{y+1}\right)^{\sfrac{(m+n)}{2}} \\
&\qquad\times \binom{n+\oldk-1}{n-\oldk}^{-1} \left(\frac{y}{y+1}\right)^{\oldk-n}P_{n-\oldk}^{(2\oldk-1, m-n)} \lpar\frac{y-1}{y+1}\rpar\\
&=(-1)^{n-\oldk} \left( \frac{\Gamma(m+\oldk)\Gamma(n+1-\oldk)}{\Gamma(n+\oldk)\Gamma(m+1-\oldk)}\right)^{\sfrac{1}{2}}
y^{-\oldk} \left(\frac{y}{y+1}\right)^{\oldk+(m-n)/2} \\
&\qquad\times P_{n-\oldk}^{(2\oldk-1, m-n)} \lpar\frac{y-1}{y+1}\rpar,
\end{aligned}
\end{equation}
where $P_{\nu}^{(\alpha,\beta)}$ is the usual Jacobi polynomial, by the symmetry of the hypergeometric function and \cite[p.\ 170]{Bateman}.

Following Koornwinder et al.~\cite[eq.~(4.4)]{Koornwinder-et-al}, we define
\[
\mathbf{g}^{(\alpha,\beta)}_{\nu}(x)
= \left( \frac{\Gamma(\nu +1)\Gamma(\nu+\alpha+\beta +1)}{\Gamma(\nu +\alpha+1)\Gamma(\nu+\beta+1)} \right)^{\sfrac{1}{2}} \left( \frac{1-x}{2}\right)^{\sfrac{\alpha}{2}}\left( \frac{1+x}{2}\right)^{\sfrac{\beta}{2}} P_{\nu}^{(\alpha,\beta)}(x)
\]
for all $\alpha,\beta,\nu \in \N$ and all $x\in [-1,1]$.
By~\cite[eq.~(4.8)]{Koornwinder-et-al},
\begin{equation}\label{disc2}
\labs \mathbf{g}^{(\alpha,\beta)}_{\nu}(x) \rabs\leq 1.
\end{equation}
(This is an immediate consequence of relating this function to a matrix coefficient of a unitary representation of $\mathrm{SU}(2)$, an observation which goes back at least as far as  Vilenkin \cite{Vilenkin}.)
Now take $\alpha= 2\oldk-1$, $\beta = m-n$, $\nu = n-\oldk$, and
\[
x \coloneqq \frac{y-1}{y+1} \,,
\quad\text{whence}\quad
\frac{1-x}{2} = \frac{1}{y+1}
\quad\text{and}\quad \frac{1+x}{2} = \frac{y}{y+1} \,.
\]
Then $x \in (-1,1)$ when $y \in \Rplus$, so \eqref{disc1} implies the equality
\[
W_{m,n}(y) = \frac{(-1)^{n-\oldk} }{(y+1)^{1/2}} \, \mathbf{g}_{\nu}^{(\alpha,\beta)} (x) \,,
\]
and \eqref{disc2} yields the desired inequality immediately.
\end{proof}

Haagerup and Schlichtkrull \cite{HS} found sharper estimates for $\mathbf{g}_{\nu}^{(\alpha,\beta)} (x)$, but these do not seem to yield better inequalities for \emph{all} matrix coefficients.

\subsection{The discrete series $D^-_\oldk$.}
The construction is similar to that of $D^+_\oldk$.
We start from the group action of $G$ on $D$ given by
\[
x z \coloneqq \frac{\alpha z+\beta}{\bar{\beta}z +\bar{\alpha}}
\qquad\forall x \in G \quad\forall z\in D,
\]
and consider the representation $\pi_\ell^-$ on $\mathcal{H}_\ell$ given by
\[
\pi_\ell^-(x) f(z) \coloneqq \mu^-(x,x^{-1}z)^\ell f(x^{-1}z),
\]
where $\mu^-(x,z)\coloneqq \bar{\alpha} + \bar{\beta}z$.
Then $\pi_\ell^-$ is irreducible and acts unitarily on $\mathcal{H}_\ell$, and we say it belongs to the class $D^-_\oldk$ with $\oldk\coloneqq\frac{1}{2}\ell$. The functions
\[
g_{\ell, m}^-(z)\coloneqq (-1)^{-\oldk-m } \binom{\ell-1-\oldk-m}{-\oldk-m}^{1/2}z^{-\oldk-m} \coloneqq g_{\ell,-m}^+(z),
\]
where $m\in -\oldk-\N$, form an orthonormal basis of $\mathcal{H}_\ell$.
Evidently
\[
{\langle g_{\ell,m}^-, \pi_\ell^-(x) g_{\ell,n}^-\rangle_\ell }
= (-1)^{-m+n}\overline{\langle g_{\ell,-m}^+, \pi_\ell^+(x) g_{\ell,-n}^+\rangle_\ell}
\]
(see~\cite[eq.\ (10.29c)]{Bargmann}), so the next result follows from Theorem~\ref{teo_discrete+}.
\begin{theorem}\label{teo_discrete-}
For all $\oldzeta \in \Rplus$, all $\ell \in \Z^+$, and all $m,n\in -\half\ell - \N$,
\[
\labs  \langle g_{\ell,m}^-, \pi_\ell^-(\oldazeta) g_{\ell,n}^-\rangle_\ell \rabs \leq (\cosh\oldzeta)^{-1}.
\]
\end{theorem}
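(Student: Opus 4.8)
The plan is to obtain this immediately from Theorem~\ref{teo_discrete+} by means of the symmetry relation displayed just above the statement, which rewrites each matrix coefficient of $D^-_\oldk$ as a conjugated, sign-twisted matrix coefficient of $D^+_\oldk$. The entire analytic burden has already been discharged in the $D^+$ case, so the present argument is pure bookkeeping.

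First I would observe that the hypothesis $m,n\in-\half\ell-\N$ says precisely that $-m,-n\in\half\ell+\N$, so that $g^+_{\ell,-m}$ and $g^+_{\ell,-n}$ are legitimate basis vectors to which Theorem~\ref{teo_discrete+} may be applied. Specialising the relation
\[
\langle g_{\ell,m}^-, \pi_\ell^-(x) g_{\ell,n}^-\rangle_\ell
= (-1)^{-m+n}\overline{\langle g_{\ell,-m}^+, \pi_\ell^+(x) g_{\ell,-n}^+\rangle_\ell}
\]
to $x=\oldazeta$ and taking moduli, the unimodular factor $(-1)^{-m+n}$ (note that $-m+n\in\Z$) and the complex conjugation both disappear, so that
\[
\labs \langle g_{\ell,m}^-, \pi_\ell^-(\oldazeta) g_{\ell,n}^-\rangle_\ell \rabs
= \labs \langle g_{\ell,-m}^+, \pi_\ell^+(\oldazeta) g_{\ell,-n}^+\rangle_\ell \rabs .
\]

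Finally I would apply Theorem~\ref{teo_discrete+} with the admissible indices $-m,-n$ to bound the right-hand side by $(\cosh r)^{-1}$, which is exactly the asserted inequality. I expect no genuine obstacle here: all of the analytic content---the passage through Bargmann's formula~\eqref{disc1}, the identification with a normalised Jacobi polynomial, and the uniform estimate~\eqref{disc2}---was already spent in proving Theorem~\ref{teo_discrete+}. The only points to verify are the index substitution $m\mapsto-m$, $n\mapsto-n$ and the fact that the transition factor relating the two series is unimodular, so that it leaves the absolute value unchanged.
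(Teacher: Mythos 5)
Your argument is exactly the paper's: the displayed relation between the $D^-_\oldk$ and $D^+_\oldk$ matrix coefficients (from Bargmann's eq.\ (10.29c)), combined with the observation that $m,n\in-\half\ell-\N$ forces $-m,-n\in\half\ell+\N$, reduces the claim to Theorem~\ref{teo_discrete+} after taking absolute values. The proposal is correct and adds only the explicit bookkeeping that the paper leaves implicit.
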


\section{From pointwise to integral estimates}
In this section, we prove integral versions, as in \eqref{eq:basic-estimate}, of the estimates of Theorems~\ref{teo_principal}, \ref{teo_complementary}, \ref{teo_discrete+} and~\ref{teo_discrete-}.
We begin with a general result.

\begin{proposition}\label{prop:general-vectors}
Let $\pi$ be a unitary representation of $G$ on the Hilbert space $\mathcal{H}$, $\PP$ be a subset of $\Z$, and $\lambda$ be a real number.
Suppose that $\mathcal{H}$ has an orthonormal basis of vectors $e_\mu$, where $\mu \in \PP$, such that $\pi(k_\theta) e_\mu = \expe^{i\mu\theta} e_\mu$ for all $\mu \in \PP$.
Then the following are equivalent:
\begin{itemize}
\item[1.] for all $r\in \Rplus$ and $\mu,\nu \in \PP$,
\[
\labs \langle \pi (a_r) e_{\mu}, e_{\nu} \rangle  \rabs\leq C \expe^{-\lambda r};
\]
\item[2.]  for all $r\in \Rplus$ and $f,g\in \mathcal{H}$,
\[
\left( \int_{-\pi}^{\pi} \int_{-\pi}^{\pi} \labs \langle \pi(k_{\theta_1}a_r k_{\theta_2}) f, g \rangle  \rabs^2 \dd \theta_1 \dd\theta_2 \right)^{1/2} \leq C \expe^{-\lambda r} \lnorm f \rnorm_{\mathcal{H}} \lnorm g \rnorm_{\mathcal{H}}.
\]
\end{itemize}
\end{proposition}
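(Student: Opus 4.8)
The plan is to exploit the $K$-weight structure of the basis $\{e_\mu\}$ to separate the two rotation variables, thereby turning the double integral in statement~2 into a Parseval sum. Once that identity is in place, the two implications become the same computation read in opposite directions, and there is essentially no further work.

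First I would fix $r$, write $f_\mu \coloneqq \langle f, e_\mu\rangle$ and $g_\nu \coloneqq \langle g, e_\nu\rangle$, and use the homomorphism property together with unitarity to factor
\[
\langle \pi(k_{\theta_1} a_r k_{\theta_2}) f, g\rangle = \langle \pi(a_r)\pi(k_{\theta_2})f,\, \pi(k_{-\theta_1})g\rangle,
\]
since $\pi(k_{\theta_1})^{*} = \pi(k_{\theta_1})^{-1} = \pi(k_{-\theta_1})$. Expanding $\pi(k_{\theta_2})f = \sum_\mu f_\mu \expe^{i\mu\theta_2} e_\mu$ and $\pi(k_{-\theta_1})g = \sum_\nu g_\nu \expe^{-i\nu\theta_1} e_\nu$ and using continuity and sesquilinearity of the inner product yields the key identity
\[
\langle \pi(k_{\theta_1} a_r k_{\theta_2}) f, g\rangle = \sum_{\mu,\nu \in \PP} f_\mu \overline{g_\nu}\, \langle \pi(a_r) e_\mu, e_\nu\rangle\, \expe^{i(\nu\theta_1 + \mu\theta_2)},
\]
which displays the left-hand side as a double Fourier series on $[-\pi,\pi]^2$ with coefficients $c_{\mu\nu} = f_\mu \overline{g_\nu}\langle \pi(a_r) e_\mu, e_\nu\rangle$. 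Because $\labs\langle \pi(a_r) e_\mu, e_\nu\rangle\rabs \le 1$ by Cauchy--Schwarz and unitarity, one has $\sum_{\mu,\nu}\labs c_{\mu\nu}\rabs^2 \le \lnorm f\rnorm_{\mathcal H}^2\lnorm g\rnorm_{\mathcal H}^2 < \pinfty$, so the series converges in $L^2([-\pi,\pi]^2)$ and Parseval's identity gives
\[
\int_{-\pi}^{\pi}\int_{-\pi}^{\pi} \labs \langle \pi(k_{\theta_1} a_r k_{\theta_2}) f, g\rangle\rabs^2 \dd\theta_1 \dd\theta_2 = (2\pi)^2 \sum_{\mu,\nu \in \PP} \labs f_\mu\rabs^2 \labs g_\nu\rabs^2 \labs\langle \pi(a_r) e_\mu, e_\nu\rangle\rabs^2.
\]

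With this identity both directions are immediate. For $1 \Rightarrow 2$, I would insert the pointwise bound $\labs\langle \pi(a_r) e_\mu, e_\nu\rangle\rabs \le C\expe^{-\lambda r}$ and factor $\sum_{\mu,\nu}\labs f_\mu\rabs^2\labs g_\nu\rabs^2 = \lnorm f\rnorm_{\mathcal H}^2\lnorm g\rnorm_{\mathcal H}^2$; taking square roots produces the $L^2$ estimate with constant $2\pi C$. For $2 \Rightarrow 1$, I would specialise to $f = e_\mu$, $g = e_\nu$, so that the sum collapses to a single term and the identity reads $\int\int\labs\cdots\rabs^2 = (2\pi)^2\labs\langle \pi(a_r) e_\mu, e_\nu\rangle\rabs^2$; since $\lnorm e_\mu\rnorm_{\mathcal H} = \lnorm e_\nu\rnorm_{\mathcal H} = 1$, the hypothesised $L^2$ bound becomes the pointwise bound after dividing by $2\pi$.

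I do not expect any genuine obstacle here: the whole content is the separation-of-variables identity, and the only points needing care are the interchange of the convergent basis expansions with the inner product and the application of Parseval, both justified by the uniform bound $\labs\langle\pi(a_r)e_\mu,e_\nu\rangle\rabs\le 1$ (strong continuity of $\pi$ also guarantees that the function $(\theta_1,\theta_2)\mapsto\langle \pi(k_{\theta_1} a_r k_{\theta_2}) f, g\rangle$ is continuous, hence genuinely equal to its Fourier series). The one cosmetic wrinkle is that the constant is multiplied or divided by the universal factor $2\pi$ in passing between statements~1 and~2, so the equivalence is to be read up to this factor.
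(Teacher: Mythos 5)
Your proof is correct and follows essentially the same route as the paper's: expand $f$ and $g$ in the $K$-weight basis, use $\pi(k_\theta)e_\mu=\expe^{i\mu\theta}e_\mu$ to turn the double integral into the diagonal sum $\sum_{\mu,\nu}|f_\mu|^2|g_\nu|^2|\langle\pi(a_r)e_\mu,e_\nu\rangle|^2$ (the paper does this by integrating the quadruple sum term by term and passing to the limit via Fatou, you by Parseval with an $\ell^2$ convergence argument --- both fine). Your bookkeeping of the $(2\pi)^2$ factor is actually more careful than the paper's, which silently drops it; as you note, the equivalence holds only up to this universal factor unless the angular measures are normalised.
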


\begin{proof}
It is trivial that the second condition implies the first, so we need only prove the opposite implication.

Let $f,g\in \mathcal{H}$.
Then we may write
\[
f \eqqcolon \sum_{\mu \in \PP} b_\mu e_{\mu}
\qquad\text{and}\qquad
g\eqqcolon \sum_{\nu \in \PP} c_\nu e_{\nu};
\]
initially we suppose that only finitely many of the coefficients $b_\mu$ and $c_\nu$ are nonzero.
Thus
\begin{align*}
& \labs \langle \pi(k_{\theta_1}a_r k_{\theta_2}) f, g \rangle  \rabs^2 \\
 &\qquad = \langle \pi(a_r) \pi(k_{\theta_2}) f, \pi(k_{\theta_1}^{-1})g \rangle \overline{\langle \pi(a_r) \pi(k_{\theta_2}) f, \pi(k_{\theta_1}^{-1})g \rangle }\\
 & \qquad = \sum_{\mu,\mu',\nu,\nu'\in \PP} b_\mu \bar{b}_{\mu'} \bar{c}_\nu {c}_{\nu'} \expe^{i\theta_2 (\mu'-\mu)}\expe^{i\theta_1(\nu'-\nu)} \langle \pi(a_r)e_{\mu}, e_{\nu} \rangle \overline{\langle \pi(a_r)e_{\mu'}, e_{\nu'} \rangle}.
\end{align*}
Now, integrating twice,
\begin{align*}
 \int_{-\pi}^{\pi} \int_{-\pi}^{\pi} \labs \langle \pi(k_{\theta_1}a_r k_{\theta_2}) f, g \rangle  \rabs^2 \dd \theta_1 \dd\theta_2
 = \sum_{\mu,\nu\in \PP} \labs b_\mu \rabs^2 \labs c_\nu \rabs^2\labs  \langle \pi(a_r) e_{\mu}, e_{\nu} \rangle \rabs^2.
\end{align*}
A limiting argument using Fatou's Lemma proves the general case.
\end{proof}

For the principal and complementary series of representations, we use the notation of Section 2.
\begin{corollary}\label{cor_prin-comp}
For all $s\in \R\setminus \lset 0\rset $, $\epsilon \in \lset 0,1 \rset$, $r\in \Rplus$ and $f,g\in \mathcal{H}_{\frac{1}{2}is,\epsilon}$,
\[
\left( \int_{-\pi}^{\pi} \int_{-\pi}^{\pi} \labs\langle \pi_{\frac{1}{2}is, \epsilon} (k_{\theta_1} a_r k_{\theta_2}) f, g \rangle\rabs^2 \dd \theta_1 \dd\theta_2 \right)^{1/2} \lesssim \frac{|s|+1}{|s|} \expe^{-r}  \lnorm f \rnorm_{\mathcal{H}_{\frac{1}{2}is,\epsilon}}  \lnorm g \rnorm_{\mathcal{H}_{\frac{1}{2}is,\epsilon}}.
\]

For all $\lambda \in \pm (0,\frac{1}{2})$, $r\in \Rplus$ and $f,g\in \mathcal{H}_{\lambda}$,
\[
\left( \int_{-\pi}^{\pi} \int_{-\pi}^{\pi} \labs \llangle \pi_{\lambda} (k_{\theta_1}a_r k_{\theta_2}) f, g \rrangle  \rabs^2 \dd \theta_1 \dd\theta_2 \right)^{1/2} \lesssim \frac{1}{|\lambda|} \expe^{-(1-2|\lambda|)r}  \lnorm f \rnorm_{\mathcal{H}_{\lambda}}  \lnorm g  \rnorm_{\mathcal{H}_{\lambda}}.
\]
\end{corollary}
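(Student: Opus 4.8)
The plan is to deduce both estimates directly from Proposition~\ref{prop:general-vectors}, since each displayed inequality is precisely the integral conclusion (condition~2) of that proposition applied to the relevant series, with the pointwise bounds (condition~1) furnished by Theorems~\ref{teo_principal} and~\ref{teo_complementary}. The whole task therefore reduces, for each series, to checking that the hypotheses of Proposition~\ref{prop:general-vectors} are met --- namely that the Hilbert space carries an orthonormal basis of $K$-eigenvectors $e_\mu$ with $\pi(k_\theta) e_\mu = \expe^{i\mu\theta} e_\mu$ --- and then reading off the correct constant $C$ and decay parameter.

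For the principal series I would apply the proposition with $\mathcal{H} = \mathcal{H}_{\frac{1}{2}is,\epsilon}$, $\PP = 2\Z+\epsilon$ and $e_\mu = f_{\frac{1}{2}is,\mu}$. The orthonormality and completeness of these vectors were already recorded when the principal series was introduced, and the $K$-eigenvector property is immediate from the defining transformation law of $f_{\zeta,\mu}$: since $(\cos\phi,\sin\phi)k_\theta = (\cos(\phi+\theta),\sin(\phi+\theta))$, one has $\pi_{\frac{1}{2}is,\epsilon}(k_\theta) f_{\frac{1}{2}is,\mu} = \expe^{i\mu\theta} f_{\frac{1}{2}is,\mu}$. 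Condition~1 is then exactly Theorem~\ref{teo_principal}, with $C$ a fixed multiple of $(|s|+1)/|s|$ and decay exponent $1$, so condition~2 gives the first estimate verbatim.

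For the complementary series I would apply the proposition with $\mathcal{H} = \mathcal{H}_\lambda$ (equipped with $\llangle\cdot,\cdot\rrangle$), $\PP = 2\Z$ and $e_\mu = g_{\lambda,\mu}$. The one point needing a short argument is that the $g_{\lambda,\mu}$ really form an orthonormal basis. Each is normalised by construction, since $g_{\lambda,\mu} = d(\lambda,\mu)^{-1/2} f_{\lambda,\mu}$ and $\lnorm f_{\lambda,\mu}\rnorm_{\mathcal{H}_\lambda}^2 = d(\lambda,\mu)$ was computed above; orthogonality of $g_{\lambda,\mu}$ and $g_{\lambda,\nu}$ for $\mu\neq\nu$ follows from the same $K$-eigenvector property together with unitarity of $\pi_\lambda(k_\theta)$, because $\expe^{i\mu\theta}\llangle g_{\lambda,\mu}, g_{\lambda,\nu}\rrangle = \llangle \pi_\lambda(k_\theta) g_{\lambda,\mu}, g_{\lambda,\nu}\rrangle = \expe^{i\nu\theta}\llangle g_{\lambda,\mu}, g_{\lambda,\nu}\rrangle$ for every $\theta$ forces the pairing to vanish; and completeness follows from the density of the span of the $f_{\lambda,\mu}$ in $\mathcal{V}_\lambda$. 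With these checks in place, condition~1 is precisely Theorem~\ref{teo_complementary}, now with $C$ a fixed multiple of $1/|\lambda|$ and with the proposition's decay exponent equal to $1-2|\lambda|$; here one must keep the proposition's decay parameter notationally distinct from the representation parameter $\lambda$. Condition~2 then delivers the second estimate.

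I do not expect a genuine obstacle, since the analytic content lives entirely in the preceding theorems and Proposition~\ref{prop:general-vectors} is invoked as a black box. The only mild subtleties are the orthogonality verification for the $g_{\lambda,\mu}$ and the observation that Proposition~\ref{prop:general-vectors}, although stated for a generic inner product $\langle\cdot,\cdot\rangle$, applies verbatim to the complementary-series inner product $\llangle\cdot,\cdot\rrangle$, as its proof uses only the underlying Hilbert space structure.
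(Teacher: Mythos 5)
Your proposal is correct and follows exactly the route the paper intends: Corollary~\ref{cor_prin-comp} is stated without proof precisely because it is the instance of Proposition~\ref{prop:general-vectors} with $e_\mu = f_{\frac{1}{2}is,\mu}$ (resp.\ $e_\mu = g_{\lambda,\mu}$) and condition~1 supplied by Theorem~\ref{teo_principal} (resp.\ Theorem~\ref{teo_complementary}). Your additional verifications --- the $K$-eigenvector property from $(\cos\phi,\sin\phi)k_\theta=(\cos(\phi+\theta),\sin(\phi+\theta))$ and the orthonormality of the $g_{\lambda,\mu}$ --- are exactly the routine checks the paper leaves implicit, and they are carried out correctly.
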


For the discrete series, we use the notation of Section 3.
Proposition \ref{prop:general-vectors} obviously also holds when we replace $a_r$ and $k_\theta$ by $\tilde a_r$ and $\tilde k_\theta$, so the next result is also immediate.

\begin{corollary}
Let $\ell \in \Zplus$.
Then for all $\oldzeta\in \Rplus$ and $f,g\in \mathcal{H}_{\ell}$,
\[
\lpar \int_{-\pi}^{\pi} \int_{-\pi}^{\pi} \labs  \langle f, \pi^\pm_\ell(\oldh_{\theta_1}\oldazeta \oldh_{\theta_2}) g \rangle_\ell \rabs^2 \dd\theta_1 \dd\theta_2 \rpar^{1/2}
\leq (\cosh\oldzeta)^{-1} \lnorm f \rnorm_{\mathcal{H}_{\ell}} \lnorm g  \rnorm_{\mathcal{H}_{\ell}}.
\]
\end{corollary}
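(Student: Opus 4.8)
The plan is to invoke the variant of Proposition~\ref{prop:general-vectors} in which $a_r$ and $k_\theta$ are replaced throughout by $\oldazeta$ and $\oldh_\theta$, as announced in the remark preceding the statement. Indeed, the proof of Proposition~\ref{prop:general-vectors} uses only two features of the pair $(a_r,k_\theta)$: that the distinguished orthonormal basis is diagonalised by the compact subgroup with pairwise distinct integer characters, so that the integrations in $\theta_1$ and $\theta_2$ collapse the quadruple sum to its diagonal; and that $\pi(\oldh_{\theta_1}\oldazeta\oldh_{\theta_2})$ factors as $\pi(\oldh_{\theta_1})\pi(\oldazeta)\pi(\oldh_{\theta_2})$, isolating the operator $\pi(\oldazeta)$ whose matrix entries are to be bounded. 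Both features survive the substitution, so the argument transfers verbatim.

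First I would check the hypothesis of this transferred proposition, namely that the orthonormal basis $\{g_{\ell,m}^\pm\}$ of $\mathcal{H}_\ell$ consists of eigenvectors of $\pi^\pm_\ell(\oldh_\theta)$ with distinct integer characters. Writing $\oldh_\theta$ in the form~\eqref{Gdiscrete} (so that $\alpha=\expe^{-i\theta}$ and $\beta=0$) and feeding these into the definitions of $\pi^\pm_\ell$ gives, after a short computation,
\[
\pi^\pm_\ell(\oldh_\theta)\, g_{\ell,m}^\pm = \expe^{-2im\theta}\, g_{\ell,m}^\pm .
\]
Since $\ell=2\oldk$, the exponents $-2m$ are integers, and they are pairwise distinct as $m$ ranges over $\oldk+\N$ for $D^+_\oldk$ (respectively over $-\oldk-\N$ for $D^-_\oldk$). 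This is exactly the setting of the proposition, with $\PP=\lset -2m\rset$.

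Next I would supply the pointwise input, that is, the first condition of the proposition, but with the uniform bound $(\cosh\oldzeta)^{-1}$ in place of $C\expe^{-\lambda r}$; the computation in the proof of Proposition~\ref{prop:general-vectors} never uses the exponential shape of the bound, only its uniformity in the basis indices. Taking complex conjugates in Theorems~\ref{teo_discrete+} and~\ref{teo_discrete-} moves the representation operator onto the first argument, yielding
\[
\labs \langle \pi^\pm_\ell(\oldazeta) g_{\ell,n}^\pm,\, g_{\ell,m}^\pm\rangle_\ell \rabs \leq (\cosh\oldzeta)^{-1}
\]
uniformly in $m$, $n$ and $\ell$. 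Running the proof of the proposition with this bound then dominates the square of the left-hand side by $(\cosh\oldzeta)^{-2}\lnorm f\rnorm_{\mathcal{H}_\ell}^2\lnorm g\rnorm_{\mathcal{H}_\ell}^2$. Finally, because $\labs\langle f,\pi^\pm_\ell(x)g\rangle_\ell\rabs=\labs\langle \pi^\pm_\ell(x)g,f\rangle_\ell\rabs$, applying the proposition with the roles of $f$ and $g$ interchanged and taking square roots delivers the stated inequality.

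I do not expect any genuine obstacle: the substance lies entirely in Theorems~\ref{teo_discrete+} and~\ref{teo_discrete-} and in Proposition~\ref{prop:general-vectors}, and what remains is the bookkeeping above. The one step deserving attention is the character computation, since the collapse of the double integral to the diagonal requires the $\oldh_\theta$-eigenvalues to be integer powers of $\expe^{i\theta}$; this is precisely where the identity $\ell=2\oldk$ enters.
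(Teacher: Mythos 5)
Your argument is correct and is essentially the paper's own proof: the paper simply remarks that Proposition~\ref{prop:general-vectors} ``obviously also holds'' with $a_r$, $k_\theta$ replaced by $\oldazeta$, $\oldh_\theta$ and calls the corollary immediate from Theorems~\ref{teo_discrete+} and~\ref{teo_discrete-}. You have merely made explicit the details the paper suppresses --- the computation $\pi^\pm_\ell(\oldh_\theta)g^\pm_{\ell,m}=\expe^{-2im\theta}g^\pm_{\ell,m}$ and the observation that the proposition's proof uses only uniformity of the pointwise bound, not its exponential form --- all of which check out.
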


Sharper estimates, where $(\cosh\oldzeta)^{-1}$ is replaced by $(\cosh\oldzeta)^{-1-\delta}$ for some positive $\delta$, cannot hold.
Indeed, suppose that $\pi$ is a unitary representation of $G$ and $\Phi$ is a $K$-bi-invariant $L^2(G)$-function such that
\[
\int_{-\pi}^{\pi}\int_{-\pi}^{\pi} \labs\langle \pi(k_{\theta_1} x k_{\theta_2})f, g \rangle\rabs^{2} \dd\theta_1 \dd\theta_2
\leq \Phi^2(x)
\]
for all $f, g \in \mathcal{H}_\pi$ of norm $1$.
Define the compact subset $S_R$ of $G$ by
\[
S_R \coloneqq \{ k_{\theta_1} a_r k_{\theta_2} : r \in [0,R], \ \theta_1, \theta_2 \in \R \}.
\]
Then for all $\delta$ in $\Rplus$, there exists $R \in \Rplus$ such that
\[
\int_{G \setminus S_R} \labs\langle \pi(x)f, g \rangle\rabs^{2} \dd x \leq \delta
\]
for all $f$ and $g$ in $\mathcal{H}_\pi$ of norm $1$ (where the integral is with respect to the Haar measure).
But the set of matrix coefficients is invariant under translation and so this cannot be.
Similarly, it is not possible for \emph{all} the matrix coefficients of a unitary representation $\pi$ to belong to $L^p(G)$ for some $p \in [1,2)$ (see \cite{Cowling-Annals}).

Bargmann \cite[eq.\ (12.8)]{Bargmann} observed that, if $\pi$ belongs to the discrete series class $D_\oldk^{\pm}$, then
\begin{equation}\label{eq:disc-series}
\lpar \int_G \labs \langle \pi(x) f, g \rangle \rabs^2  \dd x \rpar^{1/2}
= (2\oldk-1)^{-1} \lnorm f \rnorm_{\mathcal{H}_\pi} \lnorm g \rnorm_{\mathcal{H}_\pi}
\end{equation}
for all $f, g \in \mathcal{H}_\pi$.
It seems unlikely that our pointwise estimate can hold with an additional constant such as $(2\oldk-1)^{-1}$.

Later it was shown that equality \eqref{eq:disc-series} holds for all ``square-integrable'' representations of all locally compact groups, provided that $2\oldk-1$ is replaced by a suitable constant, called the formal degree of the representation, and that in many cases the matrix coefficients $\langle \pi(\cdot) f, g \rangle$ are integrable for a dense set of vectors $f$ and $g$.
See Warner \cite[Chapter 4]{Warner} for more details.

\section{The metaplectic representation}

Let $G$ be the group $\mathrm{Sp}(n,\R)$ of $2n\times 2n$ real matrices $S$ such that $S^T JS = J$, where
\[
J\coloneqq\begin{pmatrix}
0& I\\
-I&0
\end{pmatrix}.
\]
Let $\mathrm{U}(2n,\R) \coloneqq \mathrm{Sp}(n,\R)\cap \mathrm{O}(2n,\R)$
and recall the Cartan decomposition $G = K A^+ K$, where $K \coloneqq \mathrm{U}(2n,\R) $ and
\[
A^+\coloneqq \lset  \mathrm{diag}(\lambda_1, \dots, \lambda_n, \lambda_{1}^{-1}, \dots, \lambda_n^{-1}): \lambda_1 \geq \lambda_2 \geq \dots \geq \lambda_n\geq 1 \rset .
\]
The matrix map $\iota \colon \mathcal{M}(n,\C) \to \mathcal{M}(2n,\R)$, given by
\[
\iota (A+i B) \coloneqq
\begin{pmatrix}
  A  & -B  \\
  B  &  A
\end{pmatrix},
\]
identifies $\mathrm{U}(n,\C)$ with $\mathrm{U}(2n,\R)$.
Then $\mathbb{T}^n$ is a maximal torus of $K$, where
\[
\T^n \coloneqq \iota \lset  \begin{pmatrix}
  \expe^{i\theta_1} &\dots &0  \\
  \vdots &  \ddots & \vdots \\
  0 & \dots & \expe^{i\theta_n}
 \end{pmatrix} : \theta_1, \dots, \theta_n \in [-\pi,\pi) \rset .
\]
We normalise the Haar measures on $K$ and $\T^n$.

Denote by $\rho \colon G \to \mathcal{U}(L^2(\R^n))$ the (projective) metaplectic representation, see~\cite{FollandHAPS,Wong}, and $\langle \cdot , \cdot \rangle $ the inner product in $L^2(\R^n)$.

The aim of this section is to prove the following theorem.
\begin{theorem}\label{teo_meta}
Let $g = \mathrm{diag}(\lambda_1,\dots, \lambda_n, \lambda_1^{-1},\dots, \lambda_n^{-1})$, where $\lambda_1,\dots,\lambda_n\in [1,\pinfty)$.
Then for all $\phi_1,\phi_2 \in L^2(\R^n)$,
\[
\left(\int_{\T^n}\int_{\T^n} \labs \langle \rho(t_1 g t_2)\phi_1, \phi_2\rangle  \rabs^2 \dd t_1  \dd t_2 \right)^{1/2} \lesssim (\lambda_1 \dots \lambda_n)^{-1/2}\lnorm \phi_1 \rnorm_2 \lnorm \phi_2 \rnorm_2.
\]
\end{theorem}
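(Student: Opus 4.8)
The plan is to reduce the $n$-dimensional estimate to a \emph{uniform one-dimensional pointwise bound}, and then to prove that bound by the Jacobi polynomial method used in Section 3.

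First I would exploit the tensor product structure of the metaplectic representation. Under the identification $L^2(\R^n)=L^2(\R)^{\otimes n}$, the diagonal matrix $g$ and the torus $\T^n$ both lie in the product subgroup $\prod_{j=1}^n\mathrm{Sp}(1,\R)$, so that $\rho(t_1 g t_2)$ factors as $\bigotimes_{j=1}^n\rho_1(\tau_{1,j}\,g_j\,\tau_{2,j})$, where $\rho_1$ is the metaplectic representation of $\mathrm{Sp}(1,\R)$ on $L^2(\R)$, $g_j=\mathrm{diag}(\lambda_j,\lambda_j^{-1})$, and $\tau_{i,j}$ is the $j$th rotation parameter of $t_i$. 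The Hermite functions $e_{\mathbf k}=\bigotimes_j e_{k_j}$ ($\mathbf k\in\N^n$) form an orthonormal basis of joint $\T^n$-eigenvectors, with $\rho(t)e_{\mathbf k}=\exp\!\big(i\sum_j(k_j+\tfrac12)\theta_j\big)e_{\mathbf k}$. Expanding $\phi_1=\sum_{\mathbf k}b_{\mathbf k}e_{\mathbf k}$ and $\phi_2=\sum_{\mathbf m}c_{\mathbf m}e_{\mathbf m}$ and integrating exactly as in the proof of Proposition~\ref{prop:general-vectors} (the cross terms vanish because the relevant weight differences $k_j-k_j'$ are integers, so character orthogonality on $\T^n$ still applies), the double integral collapses to
\[
\int_{\T^n}\int_{\T^n}|\langle\rho(t_1 g t_2)\phi_1,\phi_2\rangle|^2\dd t_1\dd t_2=\sum_{\mathbf k,\mathbf m\in\N^n}|b_{\mathbf k}|^2|c_{\mathbf m}|^2\,|\langle\rho(g)e_{\mathbf k},e_{\mathbf m}\rangle|^2.
\]
Since $\langle\rho(g)e_{\mathbf k},e_{\mathbf m}\rangle=\prod_j\langle\rho_1(g_j)e_{k_j},e_{m_j}\rangle$, everything reduces to the one-dimensional bound
\[
M(\lambda):=\sup_{k,m\in\N}|\langle\rho_1(\mathrm{diag}(\lambda,\lambda^{-1}))e_k,e_m\rangle|\lesssim\lambda^{-1/2}\qquad(\lambda\ge1);
\]
bounding each factor by $M(\lambda_j)$ and summing against $\sum_{\mathbf k}|b_{\mathbf k}|^2=\|\phi_1\|_2^2$ and $\sum_{\mathbf m}|c_{\mathbf m}|^2=\|\phi_2\|_2^2$ then yields the theorem with constant $\prod_j M(\lambda_j)\lesssim(\lambda_1\cdots\lambda_n)^{-1/2}$.

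To prove the one-dimensional bound I would note that $\rho_1(\mathrm{diag}(\lambda,\lambda^{-1}))$ is, up to a metaplectic phase, the unitary dilation $u\mapsto\lambda^{-1/2}u(\cdot/\lambda)$, which preserves the decomposition $L^2(\R)=L^2_{\mathrm{even}}\oplus L^2_{\mathrm{odd}}$. These two summands carry the lowest weight representations $D^+_{1/2}$ and $D^+_{3/2}$, that is, the classes of Section 3 with the half-integer values $\ell=\tfrac12$ and $\ell=\tfrac32$, so that the parameter $\oldk=\tfrac12\ell$ equals $\tfrac14$ and $\tfrac34$ respectively. A direct evaluation of the Gaussian integral $\lambda^{-1/2}\int_\R e_k(x/\lambda)e_m(x)\dd x$, or equally Bargmann's formula \eqref{disc1} (which is purely formal in $\oldk$), expresses the coefficient, with $r:=\log\lambda$ and $\cosh r=\tfrac12(\lambda+\lambda^{-1})$, as
\[
|\langle\rho_1(\mathrm{diag}(\lambda,\lambda^{-1}))e_k,e_m\rangle|=(\cosh r)^{-1}\,|\mathbf g^{(2\oldk-1,\,\beta)}_{\nu}(x)|,\qquad x=\frac{\sinh^2 r-1}{\sinh^2 r+1},
\]
with nonnegative integer indices $\nu$ and $\beta$ (after interchanging $k$ and $m$ if necessary).

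The two cases now behave very differently, and the second is the main obstacle. For the odd summand $\oldk=\tfrac34$ one has $2\oldk-1=\tfrac12\ge-\tfrac12$, so the bound $|\mathbf g^{(\alpha,\beta)}_\nu|\le1$ of \eqref{disc2} applies directly and gives the stronger estimate $(\cosh r)^{-1}\lesssim\lambda^{-1}$. For the even summand $\oldk=\tfrac14$, however, $2\oldk-1=-\tfrac12$ sits exactly at the endpoint of validity of \eqref{disc2}: the normalised function $\mathbf g^{(-1/2,\beta)}_\nu$ is in fact unbounded as $x\to1$ (that is, as $\lambda\to\pinfty$), so Theorem~\ref{teo_discrete+} does not apply verbatim and the naive exponent $-1$ is lost. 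The plan is to split off the singular endpoint weight: since $(\tfrac{1-x}{2})^{-1/4}=(\sinh^2 r+1)^{1/4}=(\cosh r)^{1/2}$, one of the two factors of $(\cosh r)^{-1}$ is consumed to produce exactly the expected decay $(\cosh r)^{-1/2}\lesssim\lambda^{-1/2}$, and it remains to bound $(\tfrac{1-x}{2})^{1/4}\mathbf g^{(-1/2,\beta)}_\nu(x)=C_\nu(\tfrac{1+x}{2})^{\beta/2}P^{(-1/2,\beta)}_\nu(x)$ uniformly in $\nu$, $\beta$ and $x\in(-1,1)$. I would obtain this by a quadratic transformation, which turns the $\alpha=-\tfrac12$ Jacobi polynomial of degree $\nu$ into a \emph{symmetric} ultraspherical polynomial $P^{(\beta,\beta)}_{2\nu}$ of doubled degree; the weight $(\tfrac{1+x}{2})^{\beta/2}$ matches the corresponding symmetric weight, so the quantity becomes, up to a bounded factor, the normalised function $\mathbf g^{(\beta,\beta)}_{2\nu}$, whose parameters are both $\ge-\tfrac12$ and which therefore again satisfies $|\mathbf g^{(\beta,\beta)}_{2\nu}|\le1$ by \eqref{disc2}. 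This yields $M(\lambda)\lesssim(\cosh r)^{-1/2}\lesssim\lambda^{-1/2}$ and closes the argument. The exponent is sharp: taking $e_k=e_m=e_0$ the Gaussian ground state gives $\langle\rho_1(\mathrm{diag}(\lambda,\lambda^{-1}))e_0,e_0\rangle\sim\sqrt2\,\lambda^{-1/2}$, so $M(\lambda)\gtrsim\lambda^{-1/2}$ and the bound in the theorem cannot be improved.
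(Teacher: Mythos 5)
Your proposal is correct, and it reaches the same intermediate target as the paper --- the sharp one-dimensional bound $|\langle\rho_1(\mathrm{diag}(\lambda,\lambda^{-1}))e_k,e_m\rangle|\lesssim\lambda^{-1/2}$ uniformly in the Hermite indices --- but proves that bound by a genuinely different method. The reduction to dimension one is the same in substance: the paper diagonalises the double torus average by passing through the cross-Wigner distribution and the Moyal identity, so that the orthogonality \eqref{ort_tensorwigner} does the work and all projective phases disappear automatically, whereas you apply character orthogonality directly to the matrix coefficients as in Proposition~\ref{prop:general-vectors}; both yield the same diagonal sum, and your remark that the half-integer weights enter only through integer differences is exactly what legitimises this for a projective representation. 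The real divergence is in the one-dimensional estimate: the paper (Lemma~\ref{lemmameta}) bounds the squared coefficient, rewritten as an integral of two radial Laguerre profiles, using Thangavelu's pointwise asymptotics \eqref{estAn}--\eqref{estC}, while you identify the even and odd Hermite subspaces with the lowest-weight modules $\oldk=\frac14$ and $\oldk=\frac34$, feed Bargmann's formula \eqref{disc1} into the Jacobi bound \eqref{disc2}, and repair the failure of \eqref{disc2} at $\alpha=2\oldk-1=-\frac12$ by peeling off the singular weight $(\frac{1-x}{2})^{-1/4}=(\cosh r)^{1/2}$ and applying the quadratic transformation $P^{(-1/2,\beta)}_\nu\leftrightarrow P^{(\beta,\beta)}_{2\nu}$. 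I checked the constant in that last step: by the duplication formula it equals exactly $1$, so $(\frac{1-x}{2})^{1/4}\,\mathbf{g}^{(-1/2,\beta)}_\nu(x)=\pm\,\mathbf{g}^{(\beta,\beta)}_{2\nu}\bigl(((1-x)/2)^{1/2}\bigr)$ and the argument closes with the same sharp exponent; your $e_0$ computation confirming optimality is also right. Two small points remain to be tidied: for the odd part you invoke \eqref{disc2} with $\alpha=\frac12$, outside the integer range in which the paper states it, so you should either run the analogous odd-degree transformation $P^{(1/2,\beta)}_\nu\leftrightarrow P^{(\beta,\beta)}_{2\nu+1}$ or cite the Koornwinder--Kostenko--Teschl bound in its full range; and the validity of Bargmann's formula \eqref{disc1} for $\oldk=\frac14,\frac34$ (representations of the double cover, not of $\mathrm{SU}(1,1)$ itself) needs a word of justification, for instance via the direct evaluation of $\int_\R h_k(x/\lambda)h_m(x)\dd x$ that you mention. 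In exchange for these extra checks, your route makes the structural link with Section~3 explicit and yields a clean pointwise constant $(\cosh\log\lambda_j)^{-1/2}$ per factor, whereas the paper's Wigner-transform route never has to mention the metaplectic cover or half-integer weights and rests on off-the-shelf Laguerre asymptotics.
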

Before the proof, we need some preliminaries.
We define  the cross-Wigner distribution $W(\phi_1,\phi_2)$ of $\phi_1$ and $\phi_2$ in $L^2(\R^n)$ by
\[
W(\phi_1,\phi_2)(x+iy)\coloneqq (2\pi)^{-n/2} \int_{\R^n} \expe^{-isy} \phi_1(x+\tfrac{s}{2}) \overline{\phi_2(x-\tfrac{s}{2})}  \dd s \qquad\forall x,y \in \R^n,
\]
see, e.g., \cite[formula~(3.1.2)]{Wong}. We recall the Moyal identity~\cite[Theorem~3.2]{Wong}
\[
\langle W(\phi_1, \phi_2), W(\widetilde \phi_1, \widetilde \phi_2)\rangle = \langle\phi_1, \widetilde\phi_1\rangle \langle\phi_2, \widetilde\phi_2\rangle,
\]
and the covariance property~\cite[Theorem 29.13]{Wong}
\[
W(\rho(g) \phi_1, \rho(g) \phi_2)(z) = W(\phi_1, \phi_2)(g^{-1}z)
\qquad\forall g\in G \quad\forall z\in \C^{n}.
\]
We shall write $W(\phi)$ for $W(\phi,\phi)$.

Denote by $h_n$, where $n\in \N$, the Hermite functions on $\R$.
We recall that, if $z\in \C$ and $n,k\in \N$, then (see~\cite[p.~113]{Wong})
\begin{equation}\label{hermiteformulas}
\begin{gathered}
W (h_n)(z) = 2(-1)^n (2\pi)^{-1/2} \mathcal{L}_n (2\labs z \rabs^2) \\
W(h_{n+k}, h_n)(z) = C(n,k) z^k \mathcal{L}_n^{(k)}(2\labs z \rabs^2) \\
W(h_n,h_{n+k})(z) = C(n,k) \bar{z}^k \mathcal{L}_n^{(k)}(2\labs z \rabs^2)
\end{gathered}
\end{equation}
where the Laguerre functions $\mathcal{L}_n^{(k)}$ are normalised in $L^2(\Rplus)$ and $\mathcal{L}_n \coloneqq \mathcal{L}_n^{(0)}$.
Therefore
\begin{equation}\label{ort_wigner}
\int_{-\pi}^{\pi} W(h_n,h_m)(\expe^{i\theta}z)  \dd \theta =
\begin{cases}
0  & \text{if $m \neq n$}, \\
2\pi W(h_n)(z)  &\text{if $m = n$}.
\end{cases}
\end{equation}
Denote now by $h_\beta$, where $\beta\coloneqq (\beta_1,\dots, \beta_n) \in \N^n$, the tensor product of one-dimensional Hermite functions, that is,
\[
h_\beta(x) \coloneqq \prod_{\ell\coloneqq1}^n h_{\beta_\ell}(x_\ell)
\qquad\forall x \coloneqq (x_1,\dots,x_n) \in \R^{n}.
\]
Since the cross-Wigner distribution is sesquilinear and compatible with tensor products,
\[
W(h_\beta,h_\beta)(z) = \prod_{\ell\coloneqq1}^n W(h_{\beta_\ell}, h_{\gamma_\ell})(z_\ell)
\qquad\forall \beta,\gamma \in \N^n
\]
(where $z \coloneqq (z_1,\dots, z_n) \in \C^n$), and in particular
\begin{equation}\label{wignertensprod}
W(h_\beta)(z) =  \prod_{\ell\coloneqq1}^n W(h_{\beta_\ell})(z_\ell).
\end{equation}
Thus, by~\eqref{ort_wigner},
\begin{equation}\label{ort_tensorwigner}
\int_{\T^n} W(h_\beta, h_\gamma)(kz) \dd k =
\begin{cases}
0  & \text{if $\beta\neq \gamma$}, \\
W(h_\beta)(z)  &\text{if $\beta = \gamma$}.
\end{cases}
\end{equation}

The idea of the proof of Theorem~\ref{teo_meta} is to reduce the estimate to the case where $n = 1$.
The following lemma will be the key to treating this case.
\begin{lemma}\label{lemmameta}
Suppose that $\phi_j\in L^2(\R^n)$ and $F_j(z) \coloneqq \int_{-\pi}^{\pi} W\phi_j (\expe^{i\theta }z)  \dd \theta$, where $j = 1,2$, and that $\lambda \in [1, \pinfty)$.
Then
\[
\iint_{\R^2} F_1(\lambda^{-1}x + iy) F_2(x + i\lambda^{-1}y)  \dd x  \dd y \lesssim \lnorm \phi_1 \rnorm_2^2 \lnorm \phi_2 \rnorm_2^2.
\]
\end{lemma}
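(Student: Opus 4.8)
We treat the scalar case where $z$ is a single complex variable, as the integrals $\int_{-\pi}^{\pi}$ and $\iint_{\R^2}$ indicate; thus $\phi_1,\phi_2\in L^2(\R)$ and each $F_j$ is a function on $\C\cong\R^2$, which is invariant under $z\mapsto\expe^{i\theta}z$ and hence radial. Expanding $\phi_j=\sum_{k\in\N}c^{(j)}_k h_k$ and using \eqref{hermiteformulas} together with \eqref{ort_wigner}, the angular average kills the off-diagonal Wigner terms, so that $F_j(z)=2\pi\sum_{k}\labs c^{(j)}_k\rabs^2 W(h_k)(z)$. The plan is to feed these expansions into the double integral and, rather than estimate $F_1$ and $F_2$ separately (their $L^1$ norms are \emph{not} controlled by $\lnorm\phi_j\rnorm_2^2$, since $\lnorm W(h_k)\rnorm_1\to\infty$), to recombine each resulting pair of Wigner functions into the modulus square of a single metaplectic matrix coefficient via the Moyal identity. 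The decisive gain is that this expresses the left-hand side as a sum of \emph{nonnegative} terms, so no cancellation between different indices has to be tracked by hand.

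Concretely, I first substitute $x\mapsto\lambda x$ to rewrite the left-hand side as $\lambda\int_{\R^2}F_1(a^{-1}z)\,F_2(z)\,\dd z$, where $a\coloneqq\mathrm{diag}(\lambda,\lambda^{-1})\in\mathrm{Sp}(1,\R)$ acts on $z=x+iy$ by $az=\lambda x+i\lambda^{-1}y$ (using the measure-preserving change $z\mapsto a^{-1}z$). Expanding and interchanging sum and integral—legitimate since $\int_{\R^2}\labs W(h_m)(a^{-1}z)\,W(h_k)(z)\rabs\,\dd z\le\lnorm W(h_m)\rnorm_2\lnorm W(h_k)\rnorm_2=1$ by the Moyal identity, so the double series converges absolutely—the covariance property gives $W(h_m)(a^{-1}z)=W(\rho(a)h_m)(z)$, and the Moyal identity then yields
\[
\int_{\R^2}W(h_m)(a^{-1}z)\,W(h_k)(z)\,\dd z=\labs\langle\rho(a)h_m,h_k\rangle\rabs^2 .
\]
Consequently the left-hand side equals $(2\pi)^2\lambda\sum_{m,k}\labs c^{(1)}_m\rabs^2\labs c^{(2)}_k\rabs^2\labs\langle\rho(a)h_m,h_k\rangle\rabs^2$, and bounding each coefficient by its supremum leaves
\[
\iint_{\R^2}F_1(\lambda^{-1}x+iy)F_2(x+i\lambda^{-1}y)\,\dd x\,\dd y\le(2\pi)^2\lambda\,\Bigl(\sup_{m,k}\labs\langle\rho(a)h_m,h_k\rangle\rabs^2\Bigr)\lnorm\phi_1\rnorm_2^2\,\lnorm\phi_2\rnorm_2^2 .
\]

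Everything therefore reduces to the uniform pointwise estimate $\sup_{m,k}\labs\langle\rho(a)h_m,h_k\rangle\rabs\lesssim\lambda^{-1/2}$, and this is the main obstacle. The exponent is sharp and cannot come from soft arguments: the ground-state overlap is $\langle\rho(a)h_0,h_0\rangle=\bigl(2/(\lambda+\lambda^{-1})\bigr)^{1/2}\asymp\lambda^{-1/2}$, while the trivial unitarity bound $\labs\langle\rho(a)h_m,h_k\rangle\rabs\le1$ only returns the useless factor $\lambda^{+1}$ above. To prove it I would use that $\rho(a)$ is (up to normalisation) the $L^2$-dilation and compute the coefficient explicitly from \eqref{hermiteformulas}: exactly as in Bargmann's computation \eqref{disc1}, it vanishes unless $m\equiv k\pmod 2$ and otherwise equals, up to a unimodular factor, $(\cosh r)^{-1/2}(\tanh r)^{\labs m-k\rabs/2}$ times a Jacobi polynomial evaluated at $1-2(\cosh r)^{-2}$, where $\cosh r=\tfrac12(\lambda+\lambda^{-1})\asymp\lambda$. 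The prefactor $(\cosh r)^{-1/2}$ supplies precisely the required $\lambda^{-1/2}$, and matches the ground-state value when $m=k=0$.

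What remains—and what I expect to be the one genuinely technical point—is to bound the accompanying normalised Jacobi factor by a constant uniformly in $m,k$. This is the half-integral-weight analogue of the inequality $\labs\mathbf g^{(\alpha,\beta)}_\nu\rabs\le1$ of \eqref{disc2}, and it is exactly why the metaplectic representation must be handled separately rather than folded into Theorem~\ref{teo_discrete+}: here the relevant weight is $\ell\in\{\tfrac12,\tfrac32\}$, so the Jacobi parameter $2h-1$ is negative and \eqref{disc2}, established for parameters in $\N$, does not apply verbatim. I would obtain the missing bound either by reducing these coefficients, as in the remark following \eqref{disc2}, to matrix coefficients of a unitary representation of a cover of a compact group, or directly from the generating-function identity for the relevant Laguerre integrals followed by a contour (saddle-point) estimate that keeps the bound uniform in the two indices.
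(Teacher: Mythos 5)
Your reduction is correct and in fact lands exactly where the paper's own proof lands: after expanding in Hermite functions and exploiting the positivity of the resulting terms, the lemma is equivalent to the single uniform estimate $\sup_{m,k}\labs\langle\rho(a)h_m,h_k\rangle\rabs\lesssim\lambda^{-1/2}$ with $a=\mathrm{diag}(\lambda,\lambda^{-1})$; the paper's target integral \eqref{intr1r2} is precisely $\lambda\labs\langle\rho(a)h_m,h_n\rangle\rabs^2$ up to constants, by the same Moyal/covariance computation you perform. The steps up to that point --- the substitution $x\mapsto\lambda x$, the absolute convergence justifying the interchange, the identity $\int W(h_m)(a^{-1}z)W(h_k)(z)\,\dd z=\labs\langle\rho(a)h_m,h_k\rangle\rabs^2$, and the observation that the series has nonnegative terms --- are all sound.

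The gap is that you then stop: the uniform bound on $\labs\langle\rho(a)h_m,h_k\rangle\rabs$ \emph{is} the entire analytic content of the lemma, and neither of your two proposed routes to it is carried out; the first would not work as described. The $\mathrm{SU}(2)$ argument behind \eqref{disc2} is unavailable here: $\mathrm{SU}(2)$ is simply connected, so there is no ``cover of a compact group'' whose matrix coefficients realise $\mathbf g_\nu^{(\alpha,\beta)}$ with $\alpha=2\oldk-1=-\tfrac12$ (the even-Hermite component), and negative Jacobi parameters are exactly the regime where the Koornwinder et al.\ inequality does not apply; the Haagerup--Schlichtkrull refinements carry extra endpoint weights and do not hand you $\labs\mathbf g\rabs\lesssim1$ either. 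The saddle-point alternative is plausible but is itself a multi-page uniform-asymptotics argument, not a remark. The paper avoids all of this by never leaving the Laguerre picture: it writes $W(h_m)(z)$ as a multiple of $\mathcal L_m(2\labs z\rabs^2)$, changes variables $(u,v)=(\lambda^{-2}r_1+r_2,\,r_1+\lambda^{-2}r_2)$ to decouple the two indices, and then needs only the pointwise bounds \eqref{estAn}--\eqref{estC} of Thangavelu together with the elementary uniform estimate \eqref{eq:Laplace-est}. If you wish to keep your (cleaner) group-theoretic formulation, you must still supply a proof of the half-integer-weight matrix coefficient bound, and the most economical one available is essentially the paper's Laguerre computation read backwards.
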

Assuming the lemma, we prove the theorem.
\begin{proof}[Proof of Theorem~\ref{teo_meta}]
By the Moyal identity and the covariance property,
\begin{align*}
\labs \langle \rho(k_1 g k_2)\phi_1, \phi_2\rangle \rabs^2
& = \langle W(\rho(k_1gk_2)\phi_1), W(\phi_2)\rangle \\
& = \int_{\C^{n}} W(\phi_1)(k_2^{-1} g^{-1} k_1^{-1}z) W(\phi_2)(z) \dd \sigma(z)\\
& = \int_{\C^{n}} W(\phi_1)(k_2^{-1} g^{-1} z) W(\phi_2)(k_1 z) \dd \sigma(z).
\end{align*}
Thus
\begin{equation}\label{innprod_wigner}
\int_{\T^n}\int_{\T^n}\labs \langle \rho(t_1 g t_2)\phi_1, \phi_2\rangle \rabs^2   \dd t_1  \dd t_2 = \int_{\C^n} F_1(g^{-1}z) F_2(z) \dd \sigma(z),
\end{equation}
where
\[
F_j(z) \coloneqq \int_{\T^n} W(\phi_j)(kz) \dd k.
\]
We now decompose the functions $\phi_j$ as sums of Hermite functions $h_\beta$, where $\beta \in \N^n$. By~\eqref{ort_tensorwigner}, if $\phi_1 \eqqcolon \sum_\beta b_\beta h_\beta$ and  $\phi_2 \eqqcolon \sum_\gamma c_\gamma h_\gamma$, then
\begin{equation}\label{decompositionphi}
\int_{\T^n}W(\phi_1)(kz) \dd k = \sum_{\beta} \labs b_\beta \rabs^2 W(h_\beta)(z),
\end{equation}
and an analogous result holds for $\phi_2$.
By~\eqref{innprod_wigner} and~\eqref{wignertensprod}, then,
\begin{align*}
&\int_{\T^n}\int_{\T^n}\labs \langle \rho(t_1 g t_2)\phi_1, \phi_2\rangle \rabs^2   \dd t_1  \dd t_2 \\
&\quad = \sum_{\beta,\gamma}\labs b_\beta \rabs^2 \labs c_\gamma \rabs^2   \int_{\C^{n}} W(h_\beta) (g^{-1}z)  W(h_\gamma) (z)  \dd \sigma(z)\\
&\quad = \sum_{\beta,\gamma}\labs b_\beta \rabs^2 \labs c_\gamma \rabs^2  \prod_{\ell = 1}^n  \iint_{\R^2} W(h_{\beta_\ell}) (\lambda_\ell^{-1}x_{\ell} +i \lambda_\ell y_{\ell})  
W(h_{\gamma_\ell})(x_{\ell} + iy_{\ell})  \dd x_{\ell} \dd y_{\ell},
\end{align*}
where $z\coloneqq (z_1,\dots, z_n)$ and $z_\ell\coloneqq x_{\ell} + i y_{\ell}$.
Thus Theorem~\ref{teo_meta} boils down to the estimate
\[
\iint_{\R^2} W(h_{\beta_\ell}) (\lambda_\ell^{-1}x_{\ell} +i \lambda_\ell y_{\ell})  W(h_{\gamma_\ell})(x_{\ell} + i y_{\ell})  \dd x_{\ell} \dd y_{\ell} \lesssim \lambda_{\ell}^{-1}
\]
for all $\ell \in \{1,\dots, n\}$,
which follows from Lemma~\ref{lemmameta}.
\end{proof}

It remains to prove Lemma~\ref{lemmameta}.

\begin{proof}[Proof of Lemma~\ref{lemmameta}]
When $\lambda$ belongs to a bounded subset of $[1,\pinfty)$, the desired bound follows at once from the Cauchy--Schwarz inequality and the Minkowski inequality for integrals, since $\lnorm W(\phi_j) \rnorm_2 = \lnorm \phi_j \rnorm_2^2$ from the Moyal identity.
Hence we may suppose that $\lambda$ is large.

We begin by decomposing $\phi_1$ and $\phi_2$ in terms of Hermite functions $h_n$. We write
\[
\phi_1 \eqqcolon \sum_{m} b_m h_m
\qquad\text{and}\qquad
\phi_2 \eqqcolon \sum_{n} c_n h_n.
\]
By~\eqref{decompositionphi} and~\eqref{hermiteformulas}, it will be necessary and sufficient to prove that
\[
\labs  \int_{\R^2} \mathcal{L}_m (2(\lambda^{-2}x^2 + \xi^2)) \mathcal{L}_n(2(x^2 + \lambda^{-2}\xi^2)) \dd x  \dd \xi \rabs \lesssim 1
\]
for all sufficiently large $\lambda$ and $m,n\in \N$, or equivalently
\begin{equation}\label{intr1r2}
\labs  \int_0^{\pinfty} \int_0^{\pinfty} \mathcal{L}_m (\lambda^{-2}r_1 + r_2) \mathcal{L}_n(r_1 + \lambda^{-2}r_2) r_1^{-1/2}r_2^{-1/2} \dd r_1  \dd r_2 \rabs \lesssim 1,
\end{equation}
which, by the change of variables $\lambda^{-2}r_1+r_2 \eqqcolon u$, $r_1 + \lambda^{-2}r_2 \eqqcolon v$, is in turn equivalent to
\begin{align*}
\labs  \iint_{S_{\epsilon}} \mathcal{L}_m (u) \mathcal{L}_n(v) (v-\epsilon u)^{-1/2}(u-\epsilon v)^{-1/2} \dd u  \dd v \rabs \lesssim 1
\end{align*}
for all small positive $\epsilon$, where $S_\epsilon \coloneqq\lset  (u,v)\in \Rplus \times \Rplus : \epsilon v\leq u \leq \epsilon^{-1}v\rset $.
We will in fact show that
\begin{align}\label{toprove1}
\iint_{S_{\epsilon}} \labs \mathcal{L}_m(u) \rabs \labs \mathcal{L}_n(v) \rabs \labs v-\epsilon u \rabs^{-1/2} \labs u-\epsilon v \rabs^{-1/2}  \dd u \dd v \lesssim 1
\end{align}
for all $\epsilon \in (0, \half]$.

The first step is an estimate for one integral.
We know that
\[
\labs \mathcal{L}_n(x) \rabs\lesssim A_n(x) +B_n(x) +C(x)
\qquad\forall x \in \Rplus,
\]
where, by~\cite[pp.\ 27--28]{Thangavelu},
\begin{align}
A_n(x) &\coloneqq (\nu x)^{-1/4} \mathbf{1}_{[0,\nu/2]}(x), \label{estAn}\\
B_n(x) &\coloneqq \nu^{-1/4}\labs \nu-x \rabs^{-1/4} \mathbf{1}_{[\nu/2, 3\nu/2]}(x)\label{estBn},\\
C(x)&\coloneqq \expe^{-\tau x};\label{estC}
\end{align}
here $\nu \coloneqq 4n+2$ and $\tau$ is a suitable (positive) constant.
Hence
\begin{equation}\label{eq:Laplace-est}
\int_0^{\pinfty} \labs\mathcal{L}_n(x)\rabs \labs x - s\rabs^{-1/2}  \dd x \lesssim 1
\end{equation}
for all $n \in \N$ and $s \in \R$.
Indeed, set $s \coloneqq \nu t$ and $x \coloneqq \nu y$; then
\[
\int_0^{\nu/2}  (\nu x)^{-1/4} \labs x - s \rabs^{-1/2}  \dd x
=  \int_0^{1/2}  y^{-1/4} \labs y - t \rabs^{-1/2}  \dd y
\lesssim 1
\]
uniformly in $t$; moreover, similarly,
\[
\int_{\nu/2}^{3\nu/2}  \nu^{-1/4} \labs \nu - x \rabs^{-1/4} \labs x - s \rabs^{-1/2}  \dd x
= \int_{1/2}^{3/2} \labs 1 - x \rabs^{-1/4} \labs x - t \rabs^{-1/2}  \dd x
\lesssim 1
\]
uniformly in $t$; and finally,
\[
\int_0 ^{\pinfty} \expe^{-\tau x} \labs x - s \rabs^{-1/2}  \dd x
\lesssim 1
\]
uniformly in $s$.

By symmetry, it will suffice to estimate the integral \eqref{toprove1} over the region $T_\epsilon \coloneqq \lset  (u,v) : \epsilon v \leq u \leq v  \rset$.
In this region, $v - \epsilon u \geq v/2$ since $\epsilon \in (0, \half]$, and so
\begin{align*}
&\iint_{T_{\epsilon}} \labs \mathcal{L}_m (u) \rabs \labs \mathcal{L}_n(v) \rabs \labs v-\epsilon u\rabs^{-1/2} \labs u-\epsilon v\rabs^{-1/2} \dd u \dd v \\
&\qquad\lesssim \iint_{T_{\epsilon}} \labs \mathcal{L}_m (u) \rabs \labs \mathcal{L}_n(v) \rabs \labs v\rabs ^{-1/2} \labs u-\epsilon v \rabs^{-1/2} \dd u  \dd v \\
&\qquad\leq \int_{0}^{\pinfty} \int_{0}^{\pinfty} \labs \mathcal{L}_m (u) \rabs \labs \mathcal{L}_n(v) \rabs \labs v\rabs^{-1/2} \labs u-\epsilon v\rabs^{-1/2}  \dd u  \dd v \\
&\qquad\leq \int_{0}^{\pinfty} \labs \mathcal{L}_n(v) \rabs \labs v\rabs^{-1/2}  \dd v
\lesssim 1,
\end{align*}
by two applications of \eqref{eq:Laplace-est}, as required.
\end{proof}

Before stating a corollary, we recall that $K\coloneqq \mathrm{U}(2n, \R)$.

\begin{corollary}\label{corsingval}
Let $g\in G$ and let $\lambda_1,\dots, \lambda_n,\lambda_n^{-1},\dots, \lambda_1^{-1}$ be its singular values, arranged so that $\lambda_1,\dots, \lambda_n \in [1, \pinfty)$.
Then for all $\phi_1,\phi_2 \in L^2(\R^n)$,
\[
\left(\int_{K}\int_{K} \labs \langle \rho(k_2gk_1)\phi_1, \phi_2\rangle  \rabs^2 \dd k_1  \dd k_2 \right)^{1/2} \lesssim (\lambda_1 \dots \lambda_n)^{-1/2}\lnorm \phi_1 \rnorm_2 \lnorm \phi_2 \rnorm_2.
\]
\end{corollary}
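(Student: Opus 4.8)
The plan is to deduce Corollary~\ref{corsingval} from Theorem~\ref{teo_meta} by combining the Cartan decomposition of $G$ with an averaging argument that introduces integrations over the torus $\T^n$ for free.

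To begin, I would invoke the Cartan decomposition $G = K A^+ K$ to write $g = u_1 a u_2$ with $u_1, u_2 \in K$ and $a \coloneqq \mathrm{diag}(\lambda_1, \dots, \lambda_n, \lambda_1^{-1}, \dots, \lambda_n^{-1}) \in A^+$; this is precisely the singular value decomposition of $g$, so the diagonal entries of $a$ are the prescribed singular values and $\lambda_1 \geq \dots \geq \lambda_n \geq 1$. Substituting $k_1 \mapsto u_2^{-1} k_1$ and $k_2 \mapsto k_2 u_1^{-1}$ in the integral over $K \times K$ and using the left and right invariance of the normalised Haar measure on the compact group $K$, the element $g$ is replaced by $a$. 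Thus it suffices to prove
\[
\int_{K}\int_{K} \labs \langle \rho(k_2 a k_1)\phi_1, \phi_2\rangle \rabs^2 \dd k_1 \dd k_2 \lesssim (\lambda_1 \cdots \lambda_n)^{-1} \lnorm \phi_1 \rnorm_2^2 \lnorm \phi_2 \rnorm_2^2.
\]

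For the averaging step, I would exploit that $\T^n$ is a subgroup of $K$ with total mass one. For every fixed $t_1, t_2 \in \T^n$, the substitutions $k_1 \mapsto t_1 k_1$ and $k_2 \mapsto k_2 t_2$ leave the left-hand side above unchanged while turning $a$ into $t_2 a t_1$; averaging over $t_1, t_2 \in \T^n$ and applying Fubini's theorem therefore gives
\[
\int_{K}\int_{K} \labs \langle \rho(k_2 a k_1)\phi_1, \phi_2\rangle \rabs^2 \dd k_1 \dd k_2 = \int_{K}\int_{K} \lpar \int_{\T^n}\int_{\T^n} \labs \langle \rho(k_2 t_2 a t_1 k_1)\phi_1, \phi_2\rangle \rabs^2 \dd t_1 \dd t_2 \rpar \dd k_1 \dd k_2.
\]
Since $\rho$ is a projective unitary representation, $\rho(k_2 t_2 a t_1 k_1)$ equals $\rho(k_2)\rho(t_2 a t_1)\rho(k_1)$ up to a unimodular factor, which disappears under the absolute value; writing $\psi_1 \coloneqq \rho(k_1)\phi_1$ and $\psi_2 \coloneqq \rho(k_2)^{-1}\phi_2$, the inner double integral becomes $\int_{\T^n}\int_{\T^n}\labs \langle \rho(t_2 a t_1)\psi_1, \psi_2\rangle\rabs^2 \dd t_1 \dd t_2$. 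This is exactly the quantity estimated in Theorem~\ref{teo_meta} (with $g = a$), so it is bounded by a constant multiple of $(\lambda_1\cdots\lambda_n)^{-1}\lnorm \psi_1 \rnorm_2^2 \lnorm \psi_2\rnorm_2^2 = (\lambda_1\cdots\lambda_n)^{-1}\lnorm \phi_1\rnorm_2^2 \lnorm \phi_2\rnorm_2^2$, because $\rho(k_1)$ and $\rho(k_2)^{-1}$ are unitary. Crucially, this bound is \emph{uniform} in $k_1$ and $k_2$, so integrating over $K\times K$ --- which has total mass one --- leaves it unchanged, and taking square roots yields the corollary.

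I do not expect any serious obstacle here: the entire content is already contained in Theorem~\ref{teo_meta}, and the passage from $\T^n$ to the full maximal compact subgroup $K$ is free precisely because the torus estimate holds uniformly over the $K$-translates $\psi_1$ and $\psi_2$. The only points that deserve a line of care are the identification of the diagonal factor $a$ with the singular values of $g$ (so that the exponent $(\lambda_1 \cdots \lambda_n)^{-1/2}$ matches), and the remark that the projective cocycle of the metaplectic representation is unimodular and hence irrelevant once absolute values are taken.
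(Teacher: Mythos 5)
Your proposal is correct and follows essentially the same route as the paper: reduce to the diagonal case via the Cartan decomposition and bi-invariance of Haar measure on $K$, insert the torus integrals for free by averaging over $\T^n \subset K$, and then apply Theorem~\ref{teo_meta} to $\rho(k_1)\phi_1$ and $\rho(k_2^{-1})\phi_2$ uniformly in $k_1, k_2$. Your explicit remark that the projective cocycle is unimodular and hence harmless under the absolute value is a small point of care that the paper leaves implicit.
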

\begin{proof}
By the Cartan decomposition of $g$ and a change of variables, we see that it suffices to consider the case where $g = \mathrm{diag}(\lambda_1,\dots, \lambda_n, \lambda_1^{-1},\dots, \lambda_n^{-1})$ and $\lambda_1 \geq \dots \geq \lambda_n \geq 1$.  Then
\begin{align*}
& \int_{K}\int_{K} \labs \langle \rho(k_2gk_1)\phi_1, \phi_2\rangle  \rabs^2 \dd k_1  \dd k_2 \\
&\qquad = \int_{K} \int_{K} \int_{\T^n}\int_{\T^n}  \labs \langle \rho({k}_2 t_2g t_1 {k}_1)\phi_1, \phi_2\rangle  \rabs^2  \dd t_1  \dd t_2  \dd {k}_1 \dd {k}_2,
\end{align*}
as the Haar measure on $K$ is invariant under translations on both sides.
Now
\[
\langle \rho({k}_2 t_2g t_1 {k}_1)\phi_1, \phi_2\rangle
= \langle \rho(t_2g t_1)\rho({k}_1)\phi_1, \rho({k}_2^{-1})\phi_2\rangle,
\]
so that the statement follows by applying Theorem~\ref{teo_meta} to the functions $\rho({k}_1)\phi_1$ and $\rho({k}_2^{-1})\phi_2$, which have the same norm as $\phi_1$ and $\phi_2$ since $\rho$ is unitary.
\end{proof}

Now we present, as a consequence of the above results, a new fixed-time estimate of dispersive type for the Schr\"odinger equation in $\R^n$. In fact the propagator of the free Schr\"odinger equation is a particular metaplectic operator (see, e.g., \cite{FollandHAPS, Wong}).

\begin{theorem}\label{Theorem-5.4}
Let $t\in \R$. Then for all $\phi_1,\phi_2 \in L^2(\R^n)$
\begin{equation*}
\left(\int_K\int_K \labs \langle \expe^{it
\Delta}\rho(k_1)\phi_1,\rho(k_2) \phi_2\rangle\rabs ^2 \dd k_1 \dd k_2 \right)^{1/2}\lesssim (1+|t|)^{-n/2}\lnorm \phi_1 \rnorm_2\lnorm \phi_2 \rnorm_2.
\end{equation*}
\end{theorem}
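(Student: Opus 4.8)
The plan is to recognise the free Schrödinger propagator $\expe^{it\Delta}$ as a metaplectic operator and then invoke Corollary~\ref{corsingval}. The classical flow generated by the free Hamiltonian $|\xi|^2$ is the symplectic shear
\[
g_t = \begin{pmatrix} I & 2tI \\ 0 & I \end{pmatrix} \in \mathrm{Sp}(n,\R),
\]
and, up to a unimodular phase (which is irrelevant here, since $\rho$ is projective and we take absolute values), we have $\expe^{it\Delta} = \rho(g_t)$. First I would rewrite the inner product as a matrix coefficient of $\rho$ at the point $k_2^{-1} g_t k_1$: since $\rho(k_2)$ is unitary and $\rho$ is multiplicative up to unimodular phases,
\[
\langle \expe^{it\Delta}\rho(k_1)\phi_1, \rho(k_2)\phi_2\rangle = \langle \rho(k_2)^{-1}\rho(g_t)\rho(k_1)\phi_1, \phi_2\rangle = c\,\langle \rho(k_2^{-1} g_t k_1)\phi_1, \phi_2\rangle
\]
for some $c$ with $|c| = 1$, so the modulus of the left-hand side equals $|\langle \rho(k_2^{-1} g_t k_1)\phi_1, \phi_2\rangle|$. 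Substituting this into the double integral over $K\times K$ and using the invariance of the Haar measure on $K$ under inversion (replacing $k_2$ by $k_2^{-1}$) brings the expression into exactly the form treated in Corollary~\ref{corsingval}.

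Applying that corollary gives
\[
\left(\int_K\int_K |\langle \expe^{it\Delta}\rho(k_1)\phi_1, \rho(k_2)\phi_2\rangle|^2 \dd k_1 \dd k_2\right)^{1/2} \lesssim (\lambda_1\cdots\lambda_n)^{-1/2}\lnorm\phi_1\rnorm_2\lnorm\phi_2\rnorm_2,
\]
where $\lambda_1,\dots,\lambda_n \in [1,\pinfty)$ are the singular values of $g_t$ that are at least $1$. It then remains only to compute these and to show that $(\lambda_1\cdots\lambda_n)^{-1/2} \lesssim (1+|t|)^{-n/2}$. Since $g_t$ decouples coordinate by coordinate into $n$ identical $2\times 2$ shears $\left(\begin{smallmatrix} 1 & 2t \\ 0 & 1\end{smallmatrix}\right)$, all $n$ large singular values coincide with a single value $\lambda$, so $(\lambda_1\cdots\lambda_n)^{-1/2} = \lambda^{-n/2}$. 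Diagonalising $g_t^T g_t$ in the $2\times 2$ block, $\lambda^2$ is the larger root of $X^2 - (2+4t^2)X + 1 = 0$, whence $\lambda^2 = 1 + 2t^2 + \sqrt{(1+2t^2)^2 - 1} \geq 1 + 2t^2$. A direct elementary estimate then yields $\lambda \geq \sqrt{1+2t^2} \gtrsim 1 + |t|$, so $\lambda^{-n/2} \lesssim (1+|t|)^{-n/2}$, which is exactly the claimed bound.

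The conceptual steps are all short; the only point requiring care is the identification $\expe^{it\Delta} = \rho(g_t)$ together with the bookkeeping that turns the two-sided $K$-average appearing in the statement into the one of Corollary~\ref{corsingval}. I expect the main (though still routine) obstacle to be checking the correct normalisation of the shear $g_t$ against the chosen conventions for $\rho$ and for the sign and constant in $\expe^{it\Delta}$; once the shear is pinned down, the block-diagonalisation, the computation of its singular values, and the final elementary inequality $\sqrt{1+2t^2}\gtrsim 1+|t|$ are all immediate, and the asymptotic $\lambda\sim c|t|$ for large $|t|$ is robust against the precise value of the constant.
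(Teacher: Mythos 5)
Your proposal is correct and follows essentially the same route as the paper: identify $\expe^{it\Delta}$ with $\rho(g_t)$ for the shear $g_t=\left(\begin{smallmatrix} I & 2tI\\ 0 & I\end{smallmatrix}\right)$, invoke Corollary~\ref{corsingval}, and compute the singular values blockwise (your larger root $1+2t^2+\sqrt{(1+2t^2)^2-1}$ is exactly the paper's $1+2t^2+2(t^4+t^2)^{1/2}$). The only difference is that you spell out the bookkeeping converting the two-sided $K$-average into the form of the corollary, which the paper leaves implicit.
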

\begin{proof}
Up to a constant of modulus $1$, $\expe^{i{t} \Delta}\coloneqq\rho(g_t)$, where
  \[
  \mathrm{Sp}(n,\R)\ni g_t\coloneqq
  \begin{pmatrix}
  I_n & 2t I_n\\
  0& I_n
  \end{pmatrix},
  \]
where $I_n$ is the $n\times n$ identity matrix, see~\cite[Proposition 29.10]{Wong}.
Hence we may apply Corollary~\ref{corsingval}.
To compute the singular values of $g_t$, we observe that $g_t$ is similar to a direct sum $c_t \oplus \dots \oplus c_t$ (with $n$ summands), where
\[
c_t
\coloneqq
\begin{pmatrix}
  1 & 2t \\
  0 & 1
\end{pmatrix}
\qquad\text{whence}\qquad
c_t^Tc_t
\coloneqq
\begin{pmatrix}
  1 & 2t \\
  2t& 1+4t^2
\end{pmatrix}.
\]
The eigenvalues of $c_t^Tc_t$ are $1+2t^2 \pm 2(t^4+t^2)^{1/2}$.
Hence the $n$ singular values of $g_t$ that are at least $1$ are all equal to $(1+2t^2 + 2(t^4+t^2)^{1/2})^{1/2}$, and the desired estimate follows.
\end{proof}

\begin{remark}
The estimate of Theorem \ref{Theorem-5.4} is similar to the usual dispersive estimate for $ \expe^{i{t} \Delta}$ from $L^1\to L^\infty$, but averaging on $K$ gives an $L^2$ estimate.\par
Note that, when the dimension $n$ is $1$, the group $K$ is just the circle group $\T^1$ and for $k_\theta\in \T^1$, the metaplectic operator $\rho(k_\theta)$ is just the fractional Fourier transform, where $\theta\in[-\pi,\pi)$.
\end{remark}

\newcommand{\arttitle}[1]{\lq #1\rq}
\newcommand{\journal}[1]{\textit{#1}}
\newcommand{\jvol}[1]{\textbf{#1}}
\newcommand{\booktitle}[1]{\textit{#1}}

\end{document}